\newcommand{\bbR}{\ensuremath{\mathbb{R}}}
\newcommand{\bbZ}{\ensuremath{\mathbb{Z}}}
\newcommand{\bbN}{\ensuremath{\mathbb{N}}}
\newcommand{\bbone}{\ensuremath{\mathds{1}}}
\newcommand{\calR}{\ensuremath{\mathcal{R}}}
\DeclareMathOperator{\Sch}{Sch}
\DeclareMathOperator{\supp}{supp}
\DeclareMathOperator{\Sub}{Sub}
\def\bs{\backslash}
\numberwithin{equation}{section}
\newtheorem{thm}{Theorem}[section]
\newtheorem{prop}[thm]{Proposition}
\newtheorem{lem}[thm]{Lemma}
\newtheorem{cor}[thm]{Corollary}
\theoremstyle{definition}
\newtheorem{dfn}[thm]{Definition}
\newtheorem{rmk}[thm]{Remark}
\newtheorem{ex}[thm]{Example}
\newcommand{\showcomments}{yes}
\newsavebox{\commentbox}
\newenvironment{mycomment}%
{\ifthenelse{\equal{\showcomments}{yes}}%
{\footnotemark
        \begin{lrbox}{\commentbox}
        \begin{minipage}[t]{1.25in}\raggedright\sffamily\tiny
        \footnotemark[\arabic{footnote}]}
{\begin{lrbox}{\commentbox}}}
{\ifthenelse{\equal{\showcomments}{yes}}
{\end{minipage}\end{lrbox}\marginpar{\usebox{\commentbox}}}
{\end{lrbox}}}
\begin{document}

\title{Co-spectral radius of intersections} 

\author[M. Fraczyk]{Mikolaj Fraczyk}
\address{Department of Mathematics \\ University of Chicago \\ Chicago, IL 60637 \\ USA}

\author[W. van Limbeek]{Wouter van Limbeek}
\address{Department of Mathematics, Statistics, and Computer Science \\ 
                 University of Illinois at Chicago \\
                 Chicago, IL 60647 \\
                 USA}
                 
\date{\today}

\begin{abstract}
We study the behavior of the co-spectral radius of a subgroup $H$ of a discrete group $\Gamma$ under taking intersections. Our main result is that the co-spectral radius of an invariant random subgroup does not drop upon intersecting with a deterministic co-amenable subgroup. As an application, we find that the intersection of independent co-amenable invariant random subgroups is co-amenable.
\end{abstract}

\maketitle


\section{Introduction}
\label{sec:intro}
Let $\Gamma$ be a countable group and let $S$ be a finite symmetric subset of $\Gamma$. The \textbf{co-spectral radius} of a subgroup $H\subset \Gamma$ (with respect to $S$) is defined as the norm of the operator $M\colon L^2(\Gamma/H)\to L^2(\Gamma/H)$:
\[ M\phi(\gamma H)=\frac{1}{|S|}\sum_{s\in S}\phi(s\gamma H),\quad \rho(\Gamma/H):=\|M\|.\]
The groups with co-spectral radius $1$ for every choice of $S$ are called {\bf co-amenable}. If the group $\Gamma$ is finitely generated, one needs only to verify that the co-spectral radius is $1$ for some generating set $S$. 

In this paper we investigate the behavior of co-spectral radius under intersections. For general subgroups $H_1,H_2\subset \Gamma$ there is not much that can be said about $\rho(\Gamma/ (H_1\cap H_2))$ other than the trivial inequality \[\rho(\Gamma/ (H_1\cap H_2)) \leq \min\{\rho(\Gamma/H_1),\rho(\Gamma/H_2)\}.\] The problem of finding  lower bounds on the co-spectral radius of an intersection is even more dire, as there are examples of non-amenable $\Gamma$ with two co-amenable subgroups $H_1,H_2$ with trivial intersection (see Example \ref{ex:Wreath}). 
However, when considering all conjugates simultaneously, we have the following elementary lower bound on the co-spectral radius of an intersection. Here and for the remainder of the paper, we write $H^g:=g^{-1}Hg$. 
\begin{thm}\label{thm:DetInt}
Let $\Gamma$ be a finitely generated group and let $S$ be a  finite symmetric generating set. Let $H_1,H_2$ be subgroups of $\Gamma$ and assume that $H_1$ is co-amenable. Then 
\[\sup_{g\in \Gamma} \rho(\Gamma/ (H_1\cap H_2^g))= \rho(\Gamma/H_2).\]
\end{thm}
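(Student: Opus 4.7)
The plan is to establish the two inequalities separately. The upper bound does not require co-amenability: since $K_g := H_1 \cap H_2^g$ is contained in $H_2^g$, the $\Gamma$-equivariant surjection $\Gamma/K_g \to \Gamma/H_2^g$ is a covering of Schreier graphs, so every length-$2n$ closed walk at $eK_g$ projects to one at $eH_2^g$; combined with the formula $\rho(\Gamma/K) = \limsup_n p_{2n}(e,e)^{1/2n}$, this gives $\rho(\Gamma/K_g) \leq \rho(\Gamma/H_2^g)$. Since $\Gamma/H_2^g \cong \Gamma/H_2$ as Schreier graphs (via $\gamma H_2^g \mapsto \gamma g^{-1}H_2$), taking the supremum yields $\sup_g \rho(\Gamma/K_g) \leq \rho(\Gamma/H_2)$.

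For the lower bound the key idea is to work on the product $\Gamma$-set $X := \Gamma/H_1 \times \Gamma/H_2$ equipped with the diagonal action, and to relate its Markov operator $M_X$ on $\ell^2(X)$ to the $\Gamma/K_g$'s. The $\Gamma$-orbits of $X$ are indexed by double cosets $H_1 \backslash \Gamma / H_2$: the orbit of $(eH_1, g^{-1}H_2)$ has stabilizer $H_1 \cap H_2^g$, hence is $\Gamma$-equivariantly isomorphic to $\Gamma/(H_1 \cap H_2^g)$. Since $M_X$ respects the orthogonal orbit decomposition of $\ell^2(X)$, one obtains
\[ \|M_X\| = \sup_{g \in \Gamma} \rho(\Gamma/(H_1 \cap H_2^g)). \]
It therefore suffices to show $\|M_X\| \geq \rho(\Gamma/H_2)$.

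To construct approximate eigenvectors for $M_X$, fix $\varepsilon > 0$ and pick a finitely supported non-negative $\phi_0 \in \ell^2(\Gamma/H_2)$ with $\|M\phi_0\| \geq (\rho(\Gamma/H_2) - \varepsilon)\|\phi_0\|$. By co-amenability of $H_1$, choose a F{\o}lner sequence $F_n \subset \Gamma/H_1$ and set $\psi_n := \mathbf{1}_{F_n} \otimes \phi_0 \in \ell^2(X)$. Expanding $\|M_X\psi_n\|^2$ yields a double sum over $s, s' \in S$ whose $(s,s')$-term factors as $|s^{-1}F_n \cap s'^{-1}F_n|$ times an inner product of translates of $\phi_0$ on $\Gamma/H_2$. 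The F{\o}lner condition gives $|s^{-1}F_n \cap s'^{-1}F_n|/|F_n| \to 1$ uniformly over $s,s' \in S$, and since $\phi_0 \geq 0$ all cross terms are non-negative, so $\|M_X\psi_n\|^2 = (1+o(1))|F_n|\|M\phi_0\|^2$. Since $\|\psi_n\|^2 = |F_n|\|\phi_0\|^2$, this gives $\|M_X\psi_n\|/\|\psi_n\| \to \|M\phi_0\|/\|\phi_0\| \geq \rho(\Gamma/H_2) - \varepsilon$, and letting $\varepsilon \to 0$ concludes.

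The main technical content is the F{\o}lner computation in the last paragraph: it shows that the diagonal coupling between the two factors of $X$ is neutralized by the approximate translation-invariance of $\mathbf{1}_{F_n}$, so that the test vector $\psi_n$ behaves under $M_X$ as if it were a genuine tensor product of Markov-approximate vectors. The remaining steps -- identifying $\Gamma$-orbits in $X$ with the quotients $\Gamma/(H_1 \cap H_2^g)$, and the covering comparison on the subgroup side -- are purely algebraic bookkeeping.
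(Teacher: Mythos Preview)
Your proof is correct and follows essentially the same approach as the paper: both identify $\ell^2(\Gamma/H_1 \times \Gamma/H_2)$ (equivalently, the tensor product $L^2(H_1\backslash\Gamma)\otimes L^2(H_2\backslash\Gamma)$) with $\bigoplus_g \ell^2(\Gamma/(H_1\cap H_2^g))$ via the orbit decomposition, and then use co-amenability of $H_1$ to produce test vectors of the form (almost-invariant vector)$\,\otimes\,$(approximate top eigenvector). The only cosmetic differences are that the paper works with the quadratic form $\langle M(u_1\otimes u_2),u_1\otimes u_2\rangle$ and abstract almost-invariant vectors, whereas you compute $\|M_X(\mathbf{1}_{F_n}\otimes\phi_0)\|^2$ directly with F{\o}lner sets; and you spell out the (easy) upper bound via the covering $\Gamma/K_g\to\Gamma/H_2^g$, which the paper leaves implicit.
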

The supremum over all conjugates in the statement of the theorem is in fact necessary, as shown by Example \ref{ex:Wreath}. However, in the presence of invariance, this can be improved upon: E.g. if $H_2=N$ is normal, Theorem \ref{thm:DetInt} immediately implies that $\rho(\Gamma/ (H_1\cap N))= \rho(\Gamma/N)$. Our aim is to generalize this to invariant random subgroups.  

An \textbf{invariant random subgroup} of $\Gamma$ (see \cite{AGV14}) is a random subgroup of $\Gamma$ whose distribution is invariant under conjugation. Invariant random subgroups simultaneously generalize the notion of finite index subgroups and normal subgroups. They have proven to be very useful tools in measured group theory (see for example \cite{7SamN,Bowen2014,BLT}).
Many results on invariant random subgroups are obtained as generalizations of statements previously known for normal subgroups. We follow this tradition and show that one can remove the supremum in Theorem \ref{thm:DetInt} when $H_2$ is an IRS. The co-spectral radius of a subgroup $H$ is invariant under conjugation of $H$ by elements of $\Gamma$ and defines a measurable function\footnote{The measurability follows from the fact that $\rho(H\bs\Gamma)=\lim_{R\to\infty}\sup_{\supp f\subset B(R)}\frac{\langle Mf,f\rangle}{\|f\|^2}$ where $B(R)$ is the $R$-ball around $H$ in $H\bs\Gamma$ and $f$ runs over non-zero functions supported in $B(R)$. The ball $B(R)$ depends measurably on $H$, so the co-spectral radius must be measurable as well.} on the space of subgroups of $\Gamma$. It follows that the co-spectral radius of an ergodic IRS $H$ is constant a.s. and therefore $\rho(\Gamma/H)$ is well-defined. We say an IRS is co-amenable if it is co-amenable a.s. Our main result is:

\begin{thm}
Let $\Gamma$ be a countable with a finite symmetric subset $S$. Let $H_1\subset \Gamma$ be a deterministic co-amenable subgroup and let $H_2$ be an ergodic invariant random subgroup of $\Gamma$. Then 
\[ \rho(\Gamma/ (H_1\cap H_2))=\rho(\Gamma/ H_2)\] almost surely.
\label{thm:main-spectral}
\end{thm}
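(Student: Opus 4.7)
The upper bound $\rho(\Gamma/(H_1\cap H_2))\leq \rho(\Gamma/H_2)$ is immediate from $H_1\cap H_2\subset H_2$. By ergodicity and conjugation-invariance of the cospectral radius, $\rho(\Gamma/H_2)$ equals some constant $c\in[0,1]$ almost surely, and the task is to show $\rho(\Gamma/(H_1\cap H_2))\geq c$ almost surely.

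The starting point is Theorem \ref{thm:DetInt}, applied to each realization of $H_2$, which gives $\sup_{g\in\Gamma} \rho(\Gamma/(H_1\cap H_2^g))=c$ almost surely. I would like to promote this supremum over conjugates into an almost-sure equality at $g=e$ using that the law of $H_2$ is invariant under conjugation. A naive attempt fails: for each fixed $g\in\Gamma$, the variable $\rho(\Gamma/(H_1\cap H_2^g))$ has the same law as $\rho(\Gamma/(H_1\cap H_2))$, but the $g$ realizing the supremum in Theorem \ref{thm:DetInt} is itself a measurable function of $H_2$, which destroys the deterministic conjugation symmetry one would want to exploit.

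To sidestep this, I would work at the level of return probabilities. For any subgroup $H\subset \Gamma$ one has $\rho(\Gamma/H)=\lim_n p_{2n}(H)^{1/2n}$, where $p_{2n}(H):=|S|^{-2n}\,|\{w\in S^{2n}: w\in H\}|$. Setting $\tau(w):=\bbP(w\in H_2)$, the IRS property of $H_2$ makes $\tau$ a positive definite class function on $\Gamma$, and one computes
\[
\bbE[p_{2n}(H_2)]=|S|^{-2n}\!\!\sum_{w\in S^{2n}}\tau(w), \qquad \bbE[p_{2n}(H_1\cap H_2)]=|S|^{-2n}\!\!\sum_{w\in S^{2n}\cap H_1}\tau(w).
\]
The first expression grows like $c^{2n}$ up to subexponential factors, essentially by the Abert--Glasner--Vir\'ag characterization of $\rho(\Gamma/H_2)$ via averaged return probabilities. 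The target is to show the same for the second and then conclude $\rho(\Gamma/(H_1\cap H_2))=c$ almost surely by combining with the trivial upper bound and standard concentration for IRS return probabilities.

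The heart of the argument---and what I expect to be the main obstacle---is showing that restricting the summation to $w\in H_1$ only costs a subexponential factor. Theorem \ref{thm:DetInt} produces a single conjugate $H_2^g$ whose intersection with $H_1$ has cospectral radius close to $c$, but one now needs a ``Folner-averaged'' version valid directly at $g=e$: the idea is to use a Folner sequence in $\Gamma/H_1$ to redistribute the weight $\tau(w)$ along conjugates $\gamma w\gamma^{-1}$ (which leaves $\tau$ unchanged by the class-function property), so that up to a subexponential loss, a controlled proportion of the redistributed loops ends up in $H_1$. This Folner-smearing step is the key new ingredient beyond Theorem \ref{thm:DetInt}, and it is where I would expect the bulk of the technical work to lie.
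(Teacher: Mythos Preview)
Your proposal has a genuine gap at the final step. You write that once $\bbE[p_{2n}(H_1\cap H_2)]$ is shown to grow like $c^{2n}$ up to subexponential factors, one can ``conclude $\rho(\Gamma/(H_1\cap H_2))=c$ almost surely by combining with the trivial upper bound and standard concentration for IRS return probabilities.'' But $H_1\cap H_2$ is \emph{not} an IRS: its law is not conjugation-invariant, since $H_1$ is not normal. No concentration of the type you invoke is available. From $\limsup_n \bbE[p_{2n}(H_1\cap H_2)]^{1/2n}=c$ together with the pointwise bound $p_{2n}(H_1\cap H_2)\le \rho(\Gamma/(H_1\cap H_2))^{2n}\le c^{2n}$ one obtains only $\operatorname{ess\,sup}\rho(\Gamma/(H_1\cap H_2))=c$; nothing rules out $\rho<c$ on a set of positive probability. (A toy illustration: if a random subgroup $K$ of a non-amenable $\Gamma$ equals $\Gamma$ with probability $1/2$ and $\{e\}$ otherwise, then $\bbE[p_{2n}(K)]^{1/2n}\to 1$ while $\rho(\Gamma/K)=\rho(\Gamma)<1$ with probability $1/2$.) Thus your averaged argument, even granting the F{\o}lner-smearing step, would recover at best the supremum bound of Theorem~\ref{thm:DetInt}, not the almost-sure statement. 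Indeed, the map $H_2\mapsto \rho(\Gamma/(H_1\cap H_2))$ is not conjugation-invariant (Example~\ref{ex:Wreath} shows exactly this), so its almost-sure constancy is the content of the theorem rather than a consequence of ergodicity.

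This is precisely the obstruction the paper isolates in its outline: an ``on average'' bound only yields the supremum over conjugates, and ``obtaining control on each ergodic component is a key part of the proof where we actually use the invariance of $H_2$.'' The paper's route is to realize $H_1\cap H_2$ as stabilizers in the product system $X_1\times X_2=(H_1\backslash\Gamma)\times X_2$, take the ergodic decomposition $\nu=\int_Z \nu_z\,d\tau(z)$, and build a single test function $f=\bbone_F\otimes f_2$ (with $F$ a F{\o}lner set in $H_1\backslash\Gamma$ and $f_2$ a witness for the embedded spectral radius on $X_2$) whose Rayleigh quotient for $I-M$ is close to $1-\lambda_2$ not merely in the $\nu$-average but simultaneously on a set of components $z$ of $\tau$-measure close to $1$. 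The crucial point is a uniform lower bound on $\|f\|_{\nu_z}^2$ across components, obtained by averaging $f^2$ along the random walk and invoking Kakutani's ergodic theorem on $(X_2,\nu_2)$; this is what replaces the missing ``concentration'' and upgrades the average statement to an almost-sure one. Your F{\o}lner idea is in the right spirit (and the paper does use a F{\o}lner set on the $H_1$ side), but it must be deployed at the level of individual ergodic components of the product, not at the level of expectations of return probabilities.
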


This result was inspired by a question of Alex Furman, asking whether the intersection of a co-amenable IRS'ses remains co-amenable. A positive answer follows from Theorem \ref{thm:main-spectral} applied in the case when both $H_1,H_2$ are co-amenable IRS'es.
\begin{cor} Let $\Gamma$ be a countable group and let $H_1,H_2$ be independent co-amenable invariant random subgroups. Then the intersection $H_1\cap H_2$ is co-amenable.
\label{cor:intersectIRS}
\end{cor}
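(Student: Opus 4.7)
The plan is to reduce this directly to Theorem~\ref{thm:main-spectral} by conditioning on $H_1$ and applying the ergodic decomposition to $H_2$. Since $\Gamma$ is countable, there are only countably many finite symmetric subsets $S\subset\Gamma$, so it suffices to fix one such $S$ and prove $\rho(\Gamma/(H_1\cap H_2))=1$ almost surely with respect to $S$; intersecting countably many full-measure events then yields co-amenability of $H_1\cap H_2$ almost surely.

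Let $\mu_i$ denote the law of $H_i$ on $\Sub(\Gamma)$, which is a compact metrizable (hence standard Borel) space in the Chabauty topology, carrying the Borel conjugation action of $\Gamma$. Each $\mu_i$ is conjugation-invariant and, by hypothesis, concentrated on co-amenable subgroups (a Borel set, by the measurability of $\rho$ recorded in the footnote). I would decompose $\mu_2$ into ergodic components $\mu_2=\int_T \mu_2^{(t)}\,d\nu(t)$; for $\nu$-a.e.\ $t$ the measure $\mu_2^{(t)}$ is an ergodic IRS, and since co-amenability is a conjugation-invariant event, $\mu_2^{(t)}$ is still concentrated on co-amenable subgroups. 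Using the independence of $H_1$ and $H_2$, I then condition on $H_1=h_1$: for $\mu_1$-a.e.\ $h_1$ the subgroup $h_1\subset\Gamma$ is deterministic and co-amenable, while $H_2$ retains its law $\mu_2$. For each ergodic component $\mu_2^{(t)}$, Theorem~\ref{thm:main-spectral} applied with the deterministic co-amenable subgroup $h_1$ and the ergodic IRS $H_2\sim\mu_2^{(t)}$ immediately gives
\[
\rho(\Gamma/(h_1\cap H_2))=\rho(\Gamma/H_2)=1\qquad\text{for $\mu_2^{(t)}$-a.e.\ $H_2$.}
\]
Integrating over $t$ and then over $h_1$ via Fubini yields $\rho(\Gamma/(H_1\cap H_2))=1$ almost surely under $\mu_1\otimes\mu_2$, as required.

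I do not expect a substantive obstacle: the argument is essentially a bookkeeping wrapper around Theorem~\ref{thm:main-spectral}. The conceptually crucial ingredient is the \emph{independence} of $H_1$ and $H_2$, which is what permits the conditioning step; without it, the conditional law of $H_2$ given $H_1$ could easily concentrate on non-co-amenable subgroups (cf.\ Example~\ref{ex:Wreath}). The ergodic decomposition of $H_2$ is only needed to match the ergodicity hypothesis of Theorem~\ref{thm:main-spectral}, while conditioning on $H_1$ is what reduces to its deterministic-subgroup hypothesis.
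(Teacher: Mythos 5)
Your proposal is correct and is essentially the argument the paper has in mind (the paper only gestures at the deduction from Theorem~\ref{thm:main-spectral} without spelling out the conditioning): you condition on $H_1$ using independence to obtain a deterministic co-amenable subgroup, decompose $H_2$ into ergodic components to meet the ergodicity hypothesis, apply the theorem fiberwise, and integrate via Fubini, with the reduction to a single $S$ handled by the fact that there are only countably many finite symmetric subsets.
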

\begin{rmk} The independence assumption in the above corollary is necessary (see Example \ref{ex:independence}).\end{rmk}
Combined with Cohen-Grigorchuk's co-growth formula \cite{Cohen,Grig} and Gekhtman-Levit's lower bound on the critical exponent of an IRS of a free group \cite[Thm 1.1]{GL19}, Theorem \ref{thm:main-spectral} yields the following corollary on the critical exponents of subgroups of the free group.
\begin{cor}
Let $F_d$ be a free group on $d\geq 2$ generators and let $S$ be the standard symmetric generating set. Write $\delta(H)$ for the critical exponent of a subgroup $H$. Suppose $H_1$ is co-amenable and $H_2$ is an ergodic IRS. Then 
\[ \delta(H_1\cap H_2)=\delta(H_2),\] almost surely. 
\end{cor}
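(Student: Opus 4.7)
The plan is to translate the equality of co-spectral radii provided by Theorem \ref{thm:main-spectral} into an equality of critical exponents, via the Cohen-Grigorchuk co-growth formula, using the Gekhtman-Levit lower bound to guarantee we are in the range of parameters where this formula is invertible.

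First, I apply Theorem \ref{thm:main-spectral} to obtain $\rho(F_d/(H_1\cap H_2)) = \rho(F_d/H_2)$ almost surely. Next, I invoke the Cohen-Grigorchuk formula \cite{Cohen,Grig}, which expresses $\rho(F_d/H)$ as an explicit function $\Phi$ of the normalized critical exponent $\alpha(H) := \delta(H)/\log(2d-1)$: $\Phi(\alpha) = \sqrt{2d-1}/d$ for $\alpha \in [0, 1/2]$, and $\Phi(\alpha) = \tfrac{\sqrt{2d-1}}{2d}\bigl((2d-1)^{\alpha - 1/2} + (2d-1)^{1/2 - \alpha}\bigr)$ for $\alpha \in [1/2, 1]$, strictly increasing on the latter interval. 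Hence $\Phi$ restricts to a bijection $[1/2, 1]\to[\sqrt{2d-1}/d, 1]$.

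By Gekhtman-Levit \cite[Thm 1.1]{GL19}, $\alpha(H_2) \geq 1/2$ a.s. In the generic case $\alpha(H_2) > 1/2$, both $\rho(F_d/H_2)$ and $\rho(F_d/(H_1\cap H_2))$ strictly exceed $\sqrt{2d-1}/d$, so the Cohen-Grigorchuk formula forces $\alpha(H_1\cap H_2) > 1/2$. Strict monotonicity of $\Phi$ on $[1/2, 1]$ then yields $\alpha(H_1 \cap H_2) = \alpha(H_2)$, i.e., $\delta(H_1 \cap H_2) = \delta(H_2)$, as desired.

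The main obstacle I foresee is the boundary case $\alpha(H_2) = 1/2$: there $\Phi$ is not injective at $\rho = \sqrt{2d-1}/d$, so the co-spectral radius identity only gives $\delta(H_1\cap H_2) \leq \tfrac{1}{2}\log(2d-1) = \delta(H_2)$, and the matching lower bound has to be obtained by other means—for instance via a direct estimate of the Poincaré series of $H_1\cap H_2$ leveraging co-amenability of $H_1$, or by appealing to a strict version of the Gekhtman-Levit bound that excludes saturation for the IRS's under consideration.
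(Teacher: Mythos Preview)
Your plan is exactly the paper's: the paper gives no detailed argument for this corollary, only the sentence preceding it, which says it follows by combining Theorem~\ref{thm:main-spectral} with the Cohen--Grigorchuk co-growth formula and the Gekhtman--Levit lower bound. Your write-up simply unpacks that sentence.

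On the boundary case $\alpha(H_2)=\tfrac12$: this is a fair point, and the paper does not address it either. Two remarks. First, your suggested fallback of applying Gekhtman--Levit to $H_1\cap H_2$ is not available, since $H_1\cap H_2$ is in general \emph{not} an IRS (conjugation moves the deterministic factor $H_1$). Second, whether the case is vacuous depends on whether the Gekhtman--Levit inequality is strict for nontrivial ergodic IRS of $F_d$; if it is, the issue disappears, and if $H_2$ is the trivial IRS the conclusion is immediate anyway. So the boundary case is a detail the paper leaves implicit rather than a flaw in your strategy; resolving it cleanly would require either citing a strict form of \cite[Thm~1.1]{GL19} or a short separate argument, but the main line of proof is as intended.
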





\subsection{Outline of the proof} 
\label{sec:outline}
We outline the proof of Theorem \ref{thm:main-spectral}. For the sake of simplicity we restrict to the case when both $H_1,H_2$ are co-amenable. We realize $H_1$ and $H_2$ as stabilizers of $\Gamma$-actions on suitable spaces $X_1$ and $X_2$. Since $H_1$ is a (deterministic) subgroup, we can take $X_1=\Gamma / H_1$. On the other hand, $H_2$ is an ergodic IRS so $X_2$ is a probability space with an ergodic measure-preserving action of $\Gamma$. The intersection $H_1\cap H_2$ is then the stabilizer of a point in $X_1\times X_2$ that is deterministic in the first variable and random in the second. Using an analogue of the Rokhlin lemma, we can find a positive measure subset $E$ of $X_2$ that locally approximates the coset space $H_2\bs\Gamma$. The product of $E$ with $X_1$ will locally approximate the coset space of $(H_1\cap H_2)\bs \Gamma$. The co-amenability of $H_2$ means that the set $E$ contains a subset $P$ that is nearly $\Gamma$-invariant. The product of such a set with a F{\o}lner set $F$ in $X_1$ should be a nearly $\Gamma$-invariant set in $X_1\times E$, and hence witnesses the amenability of the coset graph $(H_1\cap H_2)\bs \Gamma$. The latter implies that $H_1\cap H_2$ is co-amenable. 

The actual proof is more complicated because if the product system is not ergodic, one has to show the product set is nearly invariant after restriction to each ergodic component and not just on average. Otherwise we can only deduce the bound from Theorem \ref{thm:DetInt} using a supremum over all conjugates. Obtaining control on each ergodic component is a key part of the proof where we actually use the invariance of $H_2$. 



To prove Theorem \ref{thm:main-spectral} for the co-spectral radius, one should replace the F{\o}lner set in $X_2$ with a function $f_2$ that (nearly) witnesses the fact that $\rho(\Gamma/H_2)=\lambda_2$, and adapt the remainder of the proof accordingly.


\subsection*{Outline of the paper} Section \ref{sec:recap} contains background material. In Section \ref{sec:deterministic} we prove the deterministic bound on spectral radius given by Theorem \ref{thm:DetInt}. Next, in Section \ref{sec:whf}, we rephrase co-spectral radius of the (discrete) orbits in terms of \emph{embedded spectral radius} on a (continuous) measure space. 

\subsection*{Acknowledgments} The authors thank Gabor Pete and Tianyi Zheng for showing us a related problem for intersections of percolations on $\bbZ$.  We thank Alex Furman for suggesting the problem  as well as for helpful discussions. MF thanks Miklos Abert for useful discussions. We thank the anonymous referee for valuable comments. We thank the University of Illinois at Chicago for providing support for a visit by MF. MF was partly supported by ERC Consolidator Grant 648017. WvL is supported by NSF DMS-1855371. 

\section{Background}
\label{sec:recap}

\subsection{Co-amenability}
\label{sec:coamenable}
Let $\Gamma$ be a finitely generated group and let $S$ be a finite symmetric set of generators. A subgroup $H$ of $\Gamma$ is called \textbf{ co-amenable} if the Schreier graph $\Sch(H\bs \Gamma,S)$ is amenable, i.e for any $\varepsilon>0$ and any $S$ there exists a set $F\subset H\bs \Gamma$ such that $|F\Delta FS|\leq \varepsilon |F|$. Such sets will be called $\varepsilon$-F{\o}lner sets. Alternatively, a subgroup $H$ is co-amenable if and only if the representation $\ell^2(\Gamma/H)$ has almost invariant vectors, or that  $\|M\|_{L^2(H\bs \Gamma)}=1.$ 

\subsection{Invariant random subgroups}
\label{sec:IRS} Let $\Sub_\Gamma$ be the space of subgroups of $\Gamma$, equipped with the topology induced from $\{0,1\}^\Gamma$. An \textbf{invariant random subgroup} is a probability measure $\mu\in \mathcal{P}(\Sub_\Gamma)$ which is invariant under conjugation of $\Gamma$. An IRS is called {\bf co-amenable} if 
    $$\mu(\{H\in\Sub_\Gamma| H \textrm{ is co-amenable}\})=1.$$
Similarly, we say that an IRS $H$ has {\bf co-spectral radius at least} $\lambda$ if $\rho(H\bs\Gamma)\geq \lambda$ almost surely. 

For any action $\Gamma\curvearrowright X$ and $x\in X$ write $\Gamma_x$ for the stabilizer of $x$. Every IRS can be realized as a stabilizer of a random point in a probability measure preserving system:
\begin{thm}[Abert-Glasner-Virag \cite{AGV14}] For every IRS $\mu$, there exists a standard Borel probability space $(X,\nu)$ and a Borel p.m.p. $\Gamma$-action on $(X,\nu)$ such that $\mu=\int_{X}\delta_{\Gamma_x} \, d\nu(x).$ 
\end{thm}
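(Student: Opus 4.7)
The goal is to realize an arbitrary IRS $\mu$ as the stabilizer distribution of a p.m.p.\ action. A first attempt is the conjugation action on $(\Sub_\Gamma, \mu)$; this is p.m.p.\ but gives the normalizer $N_\Gamma(H)$ as the stabilizer of $H$, which can strictly contain $H$. My plan is to fix this by Bernoulli-decorating each subgroup so as to kill the residual $N_\Gamma(H)/H$ symmetry without destroying $\Gamma$-invariance of the measure.

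Concretely, take the Borel subset
\[ X := \{(H, \phi) \in \Sub_\Gamma \times [0,1]^\Gamma : \phi(hx) = \phi(x) \text{ for all } h \in H, x \in \Gamma\} \]
consisting of pairs of a subgroup together with a left-$H$-invariant coloring of $\Gamma$, and equip it with the diagonal action $\gamma \cdot (H, \phi) = (\gamma H \gamma^{-1}, \gamma \phi)$, where $(\gamma\phi)(x) = \phi(\gamma^{-1}x)$. A brief calculation shows $\gamma\phi$ is left-$(\gamma H \gamma^{-1})$-invariant, so $X$ is $\Gamma$-invariant. The measure $\nu$ is $\mu$ on the $\Sub_\Gamma$-factor and, conditional on $H$, the product Lebesgue measure on the parameterizing space $[0,1]^{H\bs \Gamma}$; measurability in $H$ follows by picking a Borel section of $\Gamma \to H\bs\Gamma$, e.g., the minimum representative under a fixed enumeration of $\Gamma$.

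The two key verifications are: (i) $\Gamma$-invariance of $\nu$ is immediate since conjugation preserves $\mu$ and the bijection $H\bs\Gamma \to \gamma H \gamma^{-1}\bs\Gamma$ induced by left multiplication by $\gamma$ just permutes the index set of the product Lebesgue measure; (ii) the stabilizer of $(H, \phi)$ is a.s.\ equal to $H$. For (ii), any $\gamma$ in the stabilizer must lie in $N_\Gamma(H)$, and descending $\phi$ to $\tilde\phi : H\bs\Gamma \to [0,1]$, the condition $\gamma\phi = \phi$ becomes $\tilde\phi(H\gamma^{-1}x) = \tilde\phi(Hx)$ for all $x$. Since $\tilde\phi$ is $\lambda_H$-a.s.\ injective, this forces $H\gamma^{-1}x = Hx$ for all $x$, equivalently $\gamma \in H$; a countable union over $\gamma \in N_\Gamma(H) \setminus H$ completes the argument, and the stabilizer map then pushes $\nu$ forward to $\mu$.

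The main obstacle is identifying the right auxiliary structure. Several natural Bernoulli extensions fail: a plain shift on $[0,1]^\Gamma$ yields the trivial stabilizer, whereas random colorings on $\Gamma/H$ with the natural left $\Gamma$-action instead give $\bigcap_{g} g H g^{-1}$, the largest normal subgroup of $\Gamma$ contained in $H$, which can be strictly smaller than $H$. The crucial pairing is left-$H$-invariant colorings acted on by left translation, since the induced partial action of $N_\Gamma(H)$ on $H\bs\Gamma$ has pointwise kernel exactly $H$.
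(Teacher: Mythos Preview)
The paper does not give its own proof of this statement: it is quoted as a background result from \cite{AGV14} and used as a black box. So there is nothing in the paper to compare against.

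That said, your argument is correct and is essentially the construction in the cited reference. The coset Bernoulli extension $X=\{(H,\phi):\phi\text{ left-}H\text{-invariant}\}$ with fiberwise product Lebesgue measure is exactly the device Abert--Glasner--Virag use; your verifications of invariance and of the a.s.\ stabilizer are the standard ones. One small remark: in the stabilizer step, the null set depends on $H$, so the clean way to phrase the ``countable union'' is over $\gamma\in\Gamma$ (not over $N_\Gamma(H)\setminus H$, which varies with $H$): for each fixed $\gamma$, the set $\{(H,\phi):\gamma\in N_\Gamma(H)\setminus H,\ \gamma\phi=\phi\}$ is $\nu$-null by Fubini, and then the union over $\gamma\in\Gamma$ is countable. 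Your closing paragraph correctly diagnoses why the naive alternatives fail, and the key point you isolate---that the right-multiplication action of $N_\Gamma(H)$ on $H\backslash\Gamma$ has kernel exactly $H$---is indeed the heart of the matter.
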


\subsection{Ergodic decomposition of infinite measures}\label{sec:decomp}
The material in this subsection is well-known to experts but difficult to locate in the literature. Our goal is to construct an ergodic decomposition for measure-preserving actions of countable groups on spaces with an infinite measure. We deduce this from the corresponding result for nonsingular actions on probability spaces:

\begin{thm}[{Greschonig-Schmidt \cite[Thm 1]{ergodic-decomp}}] Let $\Gamma$ be a countable group and let $\Gamma\curvearrowright (X,\Sigma_X, \nu)$ be a nonsingular Borel action on a standard Borel probability space. Then there exist a standard Borel probability space $(Z,\Sigma_Z, \tau)$ and a family of quasi-invariant, ergodic, pairwise mutually singular probability measures $\{\nu_z\}_{z\in Z}$ with the same Radon-Nikodym cocycle as $\nu$, and  such that for every $B\in\Sigma_X$, we have
            \begin{equation}
                \nu(B)=\int_Z \nu_z(B) d\tau(z).
                \label{eq:disintegrate}
            \end{equation}
\label{thm:greschonig-schmidt}            
\end{thm}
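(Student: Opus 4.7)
The plan is to realize the decomposition as the Rokhlin disintegration of $\nu$ over the quotient by its $\Gamma$-invariant $\sigma$-algebra $\mathcal{I}$, and then transfer the nonsingular structure fiberwise. To set this up, let $\mathcal{I}\subset\Sigma_X$ denote the sub-$\sigma$-algebra of $\Gamma$-invariant Borel sets modulo $\nu$-null sets; since $\Gamma$ is countable and $\Sigma_X$ is countably generated, $\mathcal{I}$ is countably generated. Choose a countable family $\{A_n\}$ generating $\mathcal{I}$ and define $Z$ as the quotient of $X$ by the equivalence relation $x\sim y$ iff $\bbone_{A_n}(x)=\bbone_{A_n}(y)$ for every $n$, equipped with its natural standard Borel structure (after discarding a null set if necessary). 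Let $\pi\colon X\to Z$ be the quotient map and $\tau:=\pi_*\nu$. Rokhlin's disintegration theorem yields a measurable family $\{\nu_z\}_{z\in Z}$ of probability measures on $X$, each concentrated on $\pi^{-1}(z)$, with $\nu=\int_Z \nu_z\, d\tau(z)$; mutual singularity of distinct $\nu_z$ is immediate from the disjointness of the fibers.

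Next I would verify the nonsingular structure fiberwise. For each $\gamma\in\Gamma$ the preimage under $\pi$ of any set in $\Sigma_Z$ is $\Gamma$-invariant (by construction of $Z$), so $\gamma$ descends to the identity on $Z$ modulo null sets, and in particular $\pi_*(\gamma_*\nu)=\tau$. Writing $c_\gamma:=\frac{d\gamma_*\nu}{d\nu}$, the identity $\gamma_*\nu=c_\gamma\nu$ can be disintegrated on both sides over $\tau$: by uniqueness of disintegration one gets $\gamma_*\nu_z=c_\gamma\,\nu_z$ for $\tau$-a.e.\ $z$, so each $\nu_z$ is quasi-invariant with the same Radon--Nikodym cocycle as $\nu$. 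Countability of $\Gamma$ lets one intersect the resulting null sets over $\gamma$ to obtain a single full-measure subset of $Z$ on which the cocycle statement holds for all group elements simultaneously.

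Finally, for ergodicity: any $\Gamma$-invariant measurable $B\subset X$ lies in $\mathcal{I}$, so $\bbone_B$ is $\pi$-measurable and therefore constant on each fiber, which forces $\nu_z(B)\in\{0,1\}$ for $\tau$-a.e.\ $z$. The hardest step I expect is the second one: propagating the full Radon--Nikodym cocycle identity through the disintegration requires both the uniqueness of Rokhlin disintegration in the nonsingular setting and careful bookkeeping of the countable collection of exceptional null sets, one per group element, so as to produce a single $\tau$-conull set of $z$'s for which the cocycle of $\nu_z$ literally coincides with that of $\nu$ as functions on $X$.
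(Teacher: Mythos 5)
The paper does not prove this statement --- it is quoted from Greschonig and Schmidt --- so there is no internal proof to compare against; I'll assess your proposal on its own. Disintegrating over the invariant sub-$\sigma$-algebra $\mathcal{I}$ via Rokhlin is the classical route, and it does produce mutually singular quasi-invariant measures with the right cocycle, but your ergodicity argument has a real gap. You show that for each \emph{fixed} $\Gamma$-invariant Borel set $B$ one has $\nu_z(B)\in\{0,1\}$ for $\tau$-a.e.\ $z$, but the exceptional $\tau$-null set depends on $B$, and there are uncountably many invariant sets, so this does not yield a single $\tau$-conull set of $z$ on which $\nu_z$ is ergodic. Intersecting exceptional sets over a countable generating family $\{A_n\}$ for $\mathcal{I}$ does not close the gap: an arbitrary invariant set is $\sigma(\{A_n\})$-measurable only modulo $\nu$-null sets, and since $\nu_z$ and $\nu$ are mutually singular, a $\nu$-null correction need not be $\nu_z$-null. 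In the measure-preserving case this is typically repaired by a pointwise ergodic or reverse-martingale argument; in the nonsingular case covered by the theorem this is genuinely harder, and that difficulty is essentially the content of the Greschonig--Schmidt paper, whose proof does not proceed via Rokhlin disintegration at all (they instead work in the compact convex set of quasi-invariant probability measures having the prescribed Radon--Nikodym cocycle and produce the decomposition barycentrically, with mutual singularity of distinct ergodic measures handled as a separate lemma).

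A smaller issue in the cocycle step: when you invoke ``uniqueness of disintegration'' to conclude $\gamma_*\nu_z = c_\gamma\nu_z$, Rokhlin uniqueness is stated for disintegrations into \emph{probability} measures, whereas $c_\gamma\nu_z$ has a priori total mass $\int c_\gamma\, d\nu_z$. One must first check that $\int c_\gamma\, d\nu_z = 1$ for $\tau$-a.e.\ $z$; this does hold, because $\pi_*(\gamma_*\nu)=\tau$ forces the two candidate disintegrations of $\gamma_*\nu$ over $\tau$ to have the same fiberwise normalization, but it is a step that needs to be spelled out rather than absorbed into the word ``uniqueness.''
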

As an application we have:
\begin{cor}
Let $\Gamma$ be a countable group and let $\Gamma\curvearrowright (X_1,\Sigma_{X_1}, \nu_1)$ and $\Gamma\curvearrowright (X_2,\Sigma_{X_2}, \nu_2)$ be measure-preserving Borel actions on standard Borel spaces. Suppose that $(X_1,\nu_1)$ is ergodic and that $\nu_2(X_2)=1.$ Then there exists a standard Borel probability space $(Z,\Sigma_Z,\tau)$ and a family of $\Gamma$-invariant, ergodic, pairwise mutually singular measures $\{\nu_z\}_{z\in Z}$ on $X_1\times X_2$ such that for every $B\in\Sigma_{X_1\times X_2}$, we have
    \begin{equation}
        \nu(B)=\int_Z \nu_z(B)d\tau(z).
        \label{eq:disintegrate-us}
    \end{equation}
Moreover, for every measurable set $F\subset X_1$ and $z\in Z$ we have $$\nu_z(F\times X_2)=\nu_1(F).$$
\label{cor:erg-decomp}
\end{cor}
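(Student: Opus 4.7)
The plan is to reduce the statement to Theorem \ref{thm:greschonig-schmidt} by replacing the (possibly infinite) measure $\nu_1$ with an equivalent probability measure, decomposing the resulting probability measure on $X_1 \times X_2$, and then re-weighting the ergodic components. Choose a probability measure $\nu_1'$ on $X_1$ equivalent to $\nu_1$ (possible since $X_1$ is standard Borel and $\nu_1$ is $\sigma$-finite), and set $\phi := d\nu_1/d\nu_1'$, a $\nu_1'$-a.e. positive measurable function. Then $\nu_1' \times \nu_2$ is a $\Gamma$-quasi-invariant probability measure on $X_1 \times X_2$ whose Radon--Nikodym cocycle $c(\gamma,(x_1,x_2)) = \tfrac{d\gamma_*\nu_1'}{d\nu_1'}(x_1)$ depends only on the first coordinate. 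Apply Theorem \ref{thm:greschonig-schmidt} to obtain a disintegration $\nu_1' \times \nu_2 = \int_Z \mu_z\, d\tau(z)$ with $\mu_z$ ergodic, pairwise mutually singular, each quasi-invariant with the same cocycle $c$.

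Define $\nu_z$ on $X_1 \times X_2$ by $d\nu_z := \phi(x_1)\, d\mu_z$. The key technical point is to verify that each $\nu_z$ is $\Gamma$-invariant, and I would check this by a direct change-of-variable computation: under translation by $\gamma$, the Jacobian coming from the cocycle of $\mu_z$ equals $\tfrac{d\gamma_*\nu_1'}{d\nu_1'}(x_1)$, while the $\Gamma$-invariance $\gamma_*\nu_1 = \nu_1$ is equivalent to the functional identity $\phi(\gamma^{-1}x_1)\cdot \tfrac{d\gamma_*\nu_1'}{d\nu_1'}(x_1) = \phi(x_1)$, which cancels the Jacobian exactly. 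Ergodicity and pairwise mutual singularity of the $\nu_z$ are inherited from $\mu_z$ via the equivalence $\nu_z \sim \mu_z$ (the weight $\phi$ is a.e. positive), and integrating gives $\int_Z \nu_z\, d\tau = \phi\cdot (\nu_1' \times \nu_2) = \nu_1 \times \nu_2$, establishing \eqref{eq:disintegrate-us}.

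For the ``moreover'' clause, I would analyze the pushforward $(\pi_1)_*\mu_z$ under the projection $\pi_1\colon X_1 \times X_2 \to X_1$. Since $\mu_z$ has cocycle depending only on the first coordinate, $(\pi_1)_*\mu_z$ is a $\Gamma$-quasi-invariant probability measure on $X_1$, absolutely continuous with respect to $\nu_1'$ and sharing its Radon--Nikodym cocycle. Hence its density with respect to $\nu_1'$ is $\Gamma$-invariant $\nu_1'$-a.e.; by ergodicity of $\nu_1$ (equivalently of $\nu_1'$) this density is constant, and since both are probability measures the constant equals $1$. Thus $(\pi_1)_*\mu_z = \nu_1'$ for $\tau$-a.e.\ $z$, and for any measurable $F \subset X_1$,
\[
\nu_z(F \times X_2) \;=\; \int_F \phi\, d(\pi_1)_*\mu_z \;=\; \int_F \phi\, d\nu_1' \;=\; \nu_1(F),
\]
as required. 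The main obstacle is the Radon--Nikodym bookkeeping needed to promote the quasi-invariant $\mu_z$ to the honest invariant $\nu_z$; once the cocycle identities are in place, everything else is a direct integration.
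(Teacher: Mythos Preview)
Your reduction to Theorem~\ref{thm:greschonig-schmidt} via an equivalent probability measure on $X_1$ and the subsequent reweighting by $\phi$ is exactly the paper's argument (the paper writes $w=\phi^{-1}$ and takes $w$ countably-valued, but this is cosmetic). The verification that $\nu_z$ is $\Gamma$-invariant, ergodic, and that \eqref{eq:disintegrate-us} holds is the same.

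For the ``moreover'' clause the two proofs diverge slightly. The paper simply renormalizes $\nu_z$ and $\tau$ so that $\nu_z(F\times X_2)=\nu_1(F)$ for one fixed $F$ and asserts, by ergodicity of $\nu_1$, that the normalization is independent of $F$; this amounts to the claim that $(\pi_1)_*\nu_z$ is a scalar multiple of $\nu_1$. You instead argue directly that $(\pi_1)_*\mu_z=\nu_1'$, which is the same claim in the probability-measure picture and is more explicit. However, your assertion that $(\pi_1)_*\mu_z$ is \emph{absolutely continuous} with respect to $\nu_1'$ is not justified, and it does not follow formally from what you have written: nothing prevents an individual ergodic component from projecting to a measure singular to $\nu_1'$. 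The clean fix is to first observe that $(\pi_1)_*\mu_z$ is itself ergodic (it is a factor of the ergodic $\mu_z$) and shares the Radon--Nikodym cocycle of $\nu_1'$; the standard dichotomy then gives $(\pi_1)_*\mu_z=\nu_1'$ or $(\pi_1)_*\mu_z\perp\nu_1'$, and one rules out the singular alternative for $\tau$-a.e.\ $z$ by appealing to the uniqueness of the ergodic decomposition of $\nu_1'$ (which, being ergodic, admits only the trivial one). The paper's one-line ``by ergodicity of $\nu_1$'' hides precisely the same step, so your argument is no less complete than the original---but since you spell out the density reasoning, it is worth closing this gap explicitly.
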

  \begin{proof} Fix a countably-valued Borel function $w\colon X_1\to\mathbb R_{>0}$, such that $\int_{X_1} w \, d\nu_1=1$. Write $w=c_i$ on the set $A_i$, where $\{A_i\}$ is a Borel partition of $X_1$. 
  
  Then $w(x_1)d\nu(x_1)d\nu_2(x_2)$ is a $\Gamma$-quasi-invariant probability measure on $X_1\times X_2$ with Radon-Nikodym cocycle $dw(x_1,x_2,\gamma)=\frac{w(\gamma x_1)}{w(x_1)}$. Let $(Z,\Sigma_Z,\tau)$, $z\mapsto  (w\nu)_z$, be its ergodic decomposition as provided by Theorem \ref{thm:greschonig-schmidt}. 
Now pass back from $w(x_1)d\nu_1(x_1)d\nu_2(x_2)$ to $\nu_1\times\nu_2$ by setting $$d\nu_z(x_1,x_2):=w(x_1)^{-1}d(w\nu)_z.$$ Since $\{(w\nu)_z\}_z$ are ergodic and pairwise mutually singular, the same is true for $\{\nu_z\}_z$. Since $(w\nu)_z$ have Radon-Nikodym cocycle $dw$, the measures $\nu_z$ are $\Gamma$-invariant. 


It is easy to verify that Equation \eqref{eq:disintegrate} implies the corresponding Equation \eqref{eq:disintegrate-us}.

Finally, to satisfy the last identity, choose a positive measure subset $F\subset X_1$ and renormalize $\nu_z$ and $\tau$ as follows: $$\nu_z\mapsto \frac{\nu_1(F)}{\nu_z(F\times X_2)}\nu_z,\quad d\tau(z)\to \frac{\nu_z(F\times X_2)}{\nu_1(F)}d\tau(z).$$ By ergodicity of $\nu_1$, this normalization does not depend on the choice of $F$.
\end{proof}

\subsection{Ergodic theory of equivalence relations} 
\label{sec:relations}
Let $(X,\nu)$ be a probability measure space and let $\varphi_i:U_i\to X$ be a finite family of non-singular measurable maps defined on subsets $U_i$ of $X$. The triple $(X,\nu,(\varphi_i)_{i\in I})$ is called a \textbf{graphing}. We assume that $(\varphi_i)_{i\in I}$ is \textbf{symmetric}, i.e. for each $i\in I$ the map $\varphi_i^{-1}\colon \varphi_i(U_i)\to U_i$ is also in the set $(\varphi_i)_{i\in I}$. A graphing is {\bf finite} if the index set $I$ is finite.

\begin{rmk} In our applications of this theory, $X$ will be a finite measure subset (not necessarily invariant) of a measure-preserving action of $\Gamma$, equipped with the graphing corresponding to a finite symmetric generating set $S$ of $\Gamma$, and $\nu$ will be the restricted measure or the restriction of an ergodic component. \end{rmk}

Let $\mathcal R$ be the orbit equivalence relation generated by the maps $(\varphi_i)_{i\in I}$. A measured graphing yields a random graph in the following way: For every $x\in X$, let $\mathcal G_x$ be the graph with vertex set given by the equivalence class $[x]_{\mathcal R}$ and place  an edge between $y,z\in [x]_{\mathcal R}$ whenever $z=\varphi_i(y)$ for some $i\in I$ (multiple edges are allowed). The graphs $\mathcal  G_x$ have degrees bounded by $|I|$ and are undirected since $(\varphi_i)_{i\in I}$ is symmetric. If we choose a $\nu$-random point $x$, the resulting graph $\mathcal G_x$ is a random rooted graph. The properties of $\mathcal G_x$ will depend on the graphing. For example, if the graphing consists of measure preserving maps then the resulting random graph is unimodular (see \cite{AL}). 

Suppose from now on the graphing is measure-preserving. Then the \textbf{mass transport principle} \cite{AL} asserts that for any measurable function $K:\mathcal R\to \bbR$, we have
\begin{equation}\label{eq:MTPsub}
\int_X\left( \sum_{x'\in [x]_{\mathcal R}}K(x,x')\right) \, d\nu(x)=\int_X\left( \sum_{x\in [x']_{\mathcal R}}K(x,x')\right) \, d\nu(x').
\end{equation}

\section{Co-spectral radius for deterministic intersections}\label{sec:deterministic}
In this section, we prove Theorem \ref{thm:DetInt} that gives the elementary deterministic lower bound on the supremum of co-spectral radii over all conjugates. Then we show an example that consideration of all conjugates is necessary. This example will also show the necessity of the independence assumption in Corollary \ref{cor:intersectIRS} on the co-amenability of the intersection of a pair of independent co-amenable IRS'es.
\begin{proof}[Proof of Theorem \ref{thm:DetInt}] As in the introduction, we let $M:=\frac{1}{|S|}\sum_{s\in S}s\in \mathbb C[\Gamma]$. We have the following identity between the unitary representations of $\Gamma$:
$$L^2(H_1\bs\Gamma)\otimes L^2(H_2\bs\Gamma)\simeq \bigoplus_{g\in H_1\bs \Gamma/H_2} L^2((H_1\cap H_2^g)\bs \Gamma).$$
Write $\pi_1,\pi_2$ for the unitary representations corresponding to $L^2(H_1\bs\Gamma)$ and $L^2(H_2\bs\Gamma)$. The above identity implies that 
    \[ \sup_{g\in H_1\bs \Gamma/H_2} \rho((H_1\cap H_2^g)\bs \Gamma)=\|(\pi_1\otimes \pi_2)(M)\|.\] 
To prove the theorem, it is enough to verify that $$\|(\pi_1\otimes \pi_2)(M)\|\geq \|\pi_2(M)\|=\rho(H_2\bs\Gamma).$$ Let $\varepsilon>0$. Choose unit vectors $u_1\in L^2(H_1\bs\Gamma)$ and $u_2\in L^2(H_2\bs\Gamma)$ such that $\langle \pi_1(s)u_1, u_1\rangle \geq 1-\varepsilon$ for all $s\in S$ and $\langle \pi_2(M)u_2,u_2\rangle\geq \|\pi_2(M)\|-\varepsilon$. Then 
\begin{align*}\langle (\pi_1\otimes \pi_2)(M) u_1\otimes u_2,u_1\otimes u_2\rangle =&\frac{1}{|S|}\sum_{s\in s} \langle \pi_1(s)u_1,u_1\rangle \langle \pi_2(s)u_2,u_2\rangle\\ \geq& \frac{1}{|S|}\sum_{s\in S} (1-\varepsilon)\langle \pi_2(s)u_2,u_2\rangle\\
\geq & (1-\varepsilon)(\|\pi_2(M)\|-\varepsilon).\end{align*} 
Letting $\varepsilon\to 0$ we conclude that $\|(\pi_1\otimes \pi_2)(M)\|\geq \|\pi_2(M)\|.$
\end{proof}
The supremum in the inequality seems to be necessary. Below we construct an example of a non-amenable finitely generated group $\Gamma$ with two co-amenable subgroups $H_1,H_2$ such that the intersection $H_1\cap H_2$ is trivial. In particular $$\rho(\Gamma)=\rho((H_1\cap H_2)\bs \Gamma)<\rho(H_2\bs\Gamma)=1.$$

\begin{ex}\label{ex:Wreath} Let $\Gamma := F_2^{\oplus \bbZ} \rtimes \bbZ$, where $F_2$ stands for the free group on two generators. The group is obviously non-amenable. Let $a,b$ be the standard generators of $F_2$ and let $s$ be the generator of the copy of $\bbZ$ in $\Gamma$. The triple $\{s,a,b\}$ generates $\Gamma$. Put $S:=\{s,a,b,s^{-1},a^{-1},b^{-1}\}.$ For any subset $E\subset \bbZ$ let $H_E:=F_2^{\oplus E}\subset \Gamma$. 

Now let $A,B$ be disjoint subsets of $\bbZ$ containing arbitrary long segments. Since $A\cap B=\varnothing$, the intersection $H_A\cap H_B=1$ is not co-amenable. On the other hand, we claim that for any subset $C$ containing arbitrarily long segments, $H_C$ is co-amenable, so that in particular $H_A$ and $H_B$ are co-amenable: Indeed, suppose  $C\subseteq\bbZ$ contains arbitrarily long segments. Then for any $g\in \Gamma$, the Schreier graphs for $H_C$ and $H_C^g$ are isomorphic, so $\rho(H_C\bs\Gamma)=\rho(H_C^g\bs\Gamma)$. For every $n\in\mathbb N$ 
\[\rho(H_C\bs \Gamma)=\rho(H_C^{s^n}\bs \Gamma)=\rho(H_{C-n}\bs\Gamma).\]
Let $(n_k)_{k\in\mathbb N}$ be a sequence such that $\{-k,-k+1,\ldots,k-1,k\}\subset C-{n_k}$. Then $H_{C-n_k}$ converges to $H_{\mathbb Z}$ in ${\rm Sub}(\Gamma)$ as $k\to\infty$. Since the spectral radius is lower semi-continuous on the space of subgroups, we get 
\[\rho(H_C\bs \Gamma)=\liminf_{k\to\infty}\rho(H_{C-n_k}\bs \Gamma)\geq \rho(H_{\mathbb Z}\bs \Gamma)=\rho(\mathbb Z)=1.\]
\end{ex}

\begin{ex} The above example also shows that for the intersection of two co-amenable IRS'es to be co-amenable (Corollary \ref{cor:intersectIRS}), the independence assumption is necessary. Indeed, let $\Gamma$ be as in the previous example and let $A$ be an invariant percolation on $\bbZ$ such that both $A$ and its complement contain arbitrarily long segments (e.g. Bernoulli percolation). Then $H_A$ and $H_{A^c}$ are co-amenable but their intersection is trivial.
\label{ex:independence}\end{ex} 

\section{Embedded spectral radius} \label{sec:whf}


Let us introduce some terminology. Let $(X,\nu)$ be a measure preserving $\Gamma$-action, and write $\calR$ for the corresponding orbit equivalence relation. We shall assume that $\nu$ is $\sigma$-finite but not necessarily finite. We recall that for every $x\in X$, $\mathcal G_x$ is the labeled graph with vertex set $[x]_\calR$ and edge set $(y,s y), y\in \Gamma x$, labeled by $s\in S$. 

\begin{dfn}\label{def:FinComp} A set $P\subset X$ is called a \textbf{finite connected component} if for almost all $x\in P$ the connected component of $x$ in the graph $P\cap \mathcal{G}_x$ is finite. In other words, the graphing restricted to $P$ generates a finite equivalence relation.
\end{dfn}
For any subset $P\subset X$ write $\partial P:=SP\setminus P $ for the (outer) boundary and ${\rm int}(P):=P\setminus \partial(X\setminus P)$ for the interior of $P$.

\begin{dfn}\label{def:WeakSpectralRad}
Let $(X,\nu)$ be a measure-preserving $\Gamma$-action. We say that $(X,\nu)$ has \textbf{embedded spectral radius} $\lambda$ if for every finite measure finite connected component $P\subset X$ and every $f\in L^2(X,\nu)$ supported on the interior ${\rm int}(P)$, we have
    $$\langle (I-M)f, f\rangle \geq (1-\lambda)\|f\|^2,$$ 
and $\lambda$ is minimal with this property.
\end{dfn}
\begin{rmk}
Using the monotone convergence theorem, we may assume that $f$ in the above definition is bounded. Further, taking the absolute value of $f$ leaves the right-hand side unchanged, and decreases the left-hand side. Therefore it suffices to consider nonnegative functions $f\geq 0$.
\label{rmk:nonnegative}
\end{rmk}
Our goal in this section is to prove that the embedded spectral radius of a measure preserving system $\Gamma\curvearrowright (X,\nu)$ is detected by the co-spectral radius along orbits:
\begin{prop}\label{prop:WHandCoAm} Let $(X,\nu)$ be a $\sigma$-finite measure-preserving $\Gamma$-system. Then the stabilizer of almost every point has co-spectral radius at least $\lambda$ if and only if almost every ergodic component of $\nu$ has embedded spectral radius at least $\lambda$.
\end{prop}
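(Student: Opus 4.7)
The strategy in both directions is to relate the global $L^2(X,\nu)$ quadratic form to orbit-wise $\ell^2$ quadratic forms via the mass transport principle \eqref{eq:MTPsub}. I would first reduce to the ergodic case by an ergodic decomposition in the spirit of Corollary \ref{cor:erg-decomp}; then $\rho(\mathcal G_x)$ is $\nu$-a.s.\ equal to some constant $\rho_0$, and it suffices to show the embedded spectral radius of $(X,\nu)$ equals $\rho_0$.

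For the bound embedded $\leq \rho_0$, let $f \in L^2(X,\nu)$ be supported on $\mathrm{int}(P)$ for a finite measure finite connected component $P$. Applying mass transport to the kernel
\[
  K(x,y) := \frac{f(x)^2\, \mathbf{1}[y \in \mathrm{supp}(f) \cap [x]_{\mathcal R}]}{|\mathrm{supp}(f) \cap [x]_{\mathcal R}|}
\]
yields
\[
  \|f\|_{L^2(\nu)}^2 = \int_{\mathrm{supp}(f)} \frac{\|f|_{[x]_{\mathcal R}}\|_{\ell^2}^2}{|\mathrm{supp}(f) \cap [x]_{\mathcal R}|}\, d\nu(x),
\]
and replacing $f(x)^2$ by $f(x)f(sx)$ and averaging over $s \in S$ gives the analogous identity
\[
  \langle Mf, f\rangle_{L^2(\nu)} = \int_{\mathrm{supp}(f)} \frac{\langle M f|_{[x]_{\mathcal R}}, f|_{[x]_{\mathcal R}}\rangle_{\ell^2}}{|\mathrm{supp}(f) \cap [x]_{\mathcal R}|}\, d\nu(x).
\]
Since each $f|_{[x]_{\mathcal R}}$ is finitely supported on $\mathcal G_x$, the pointwise inequality $\langle M h, h\rangle_{\ell^2} \leq \rho(\mathcal G_x)\|h\|_{\ell^2}^2$ together with $\rho(\mathcal G_x) = \rho_0$ a.s.\ gives $\langle Mf, f\rangle \leq \rho_0 \|f\|^2$.

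For the bound embedded $\geq \rho_0$, fix $\varepsilon > 0$ and a large $R$. Construct a Borel cross-section $C \subset X$ that is maximal among Borel subsets which are $(2R+3)$-separated along each orbit (built by a Luzin--Novikov style exhaustion), and restrict to a finite-measure slice so $\nu(C) \in (0,\infty)$. For each $c \in C$, let $g_c$ be the normalized nonnegative principal eigenvector of $M$ restricted to functions supported on $B_R(c) \subset \mathcal G_c$, with eigenvalue $\mu_R(c)$; this can be chosen measurably in $c$. Since $\mu_R(c) \nearrow \rho(\mathcal G_c) = \rho_0$ as $R \to \infty$, an Egorov-type argument yields $R$ and a positive-measure subset $C_\varepsilon \subset C$ on which $\mu_R(c) \geq \rho_0 - \varepsilon$. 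The $(2R+3)$-separation ensures each $y$ within graph distance $R+1$ of $C_\varepsilon$ has a unique closest point $c(y) \in C_\varepsilon$; define
\[
  f(y) := g_{c(y)}(y)\, \mathbf{1}[d(y, c(y)) \leq R,\ c(y) \in C_\varepsilon].
\]
Then $f$ sits in the interior of the finite component $A := \bigcup_{c \in C_\varepsilon} B_{R+1}(c)$. A second mass transport with kernel $\mathbf{1}[c \in C_\varepsilon]\mathbf{1}[y \in B_R(c)]\,g_c(y)^2$ (and its analog with $g_c(y)g_c(sy)$) collapses the global quantities:
\[
  \|f\|_{L^2(\nu)}^2 = \int_{C_\varepsilon} \|g_c\|_{\ell^2}^2\, d\nu(c), \quad \langle Mf, f\rangle_{L^2(\nu)} = \int_{C_\varepsilon} \langle Mg_c, g_c\rangle_{\ell^2}\, d\nu(c),
\]
so $f$ has Rayleigh quotient at least $\rho_0 - \varepsilon$.

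The main obstacle I expect lies in the reverse direction: the measure-theoretic construction of the sparse cross-section as a Borel set, verifying genuine measurability of the selection $c \mapsto g_c$ (using that principal eigenvectors of a measurably varying finite matrix can be chosen measurably), and performing Egorov cleanly inside a possibly infinite-measure ergodic component while ensuring $\nu(C_\varepsilon)>0$ through judicious localization. Once these standard-Borel details are in place, both halves of the equivalence reduce to two applications of \eqref{eq:MTPsub}.
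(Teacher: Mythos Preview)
Your proposal is correct in both directions, and the mass-transport argument for the bound ``embedded $\leq \rho_0$'' is essentially the paper's argument with a cosmetically different kernel (the paper packages the orbit-wise Dirichlet energy into a single kernel and runs a contradiction, but the content is identical). One caveat: as written, $[x]_{\mathcal R}$ reads as the full $\Gamma$-orbit, in which case $|\mathrm{supp}(f)\cap [x]_{\mathcal R}|$ can be infinite (a typical orbit revisits $P$ infinitely often) and your kernel collapses to zero; you must take $[x]_{\mathcal R}$ to be the connected component $P_x^o$ of $x$ in the graphing restricted to $P$, exactly as the paper does.

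The genuine difference is in the direction ``embedded $\geq \rho_0$.'' You build a $(2R+3)$-separated Borel set and place on each ball the Perron--Frobenius eigenvector of $M$, relying on measurable selection of that eigenvector and an Egorov step to make the local Rayleigh quotients uniformly close to $\rho_0$. The paper bypasses all of this with a finiteness trick: since there are only finitely many $S$-labelled rooted balls of a given radius $R_0$, one can pass to a positive-measure subset $X_2$ on which all balls $(B_{\mathcal G_x}(x,R_0),x)$ are \emph{isomorphic} to a single model $(\mathcal G,o)$, fix once and for all a near-optimal function $\psi$ on $\mathcal G$, and then use a Rokhlin-type lemma (applied to the graphing by words of length $\leq 2R_0$) to thin $X_2$ to a set $X_3$ on which the $R_0$-balls are pairwise disjoint. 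This simultaneously replaces your cross-section construction and your measurable selection, and the final test function is just the transplant of the fixed $\psi$ onto each ball. Your route works, but the paper's is shorter and sidesteps precisely the ``standard-Borel details'' you flagged as the main obstacle.
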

\begin{rmk}
This result can be used to give examples whose embedded spectral radius is strictly less than the spectral radius of $M$ on $L^2_0(X,\nu)$. This happens for example when $\Gamma$ is a non-abelian free group and $X=X_1\times X_2$ is a product of an essentially free action $X_1$ with an action $X_2$ that has no spectral gap. In this case, the graphs $\mathcal G_x$ are just copies of the Cayley graph of $\Gamma$ so their spectral radius is bounded away from $1$. On the other hand, the spectral radius of $M$ on $L^2_0(X,\nu)$ is $1$, because it contains $L^2_0(X_2,\nu_2)$.
\end{rmk}

\begin{proof}
By passing to ergodic components we can assume without loss of generality that $(X,\nu)$ is ergodic. If $(X,\nu)$ is periodic then there is nothing to do, so henceforth we will assume that $(X,\nu)$ is an aperiodic measure preserving ergodic system.

First let us prove that if the co-spectral radius of the stabilizer $\Gamma_x$ is at least $\lambda$, then $X$ has embedded spectral radius at least $\lambda$. Let $\varepsilon>0$ be arbitrary. Then $\nu$-almost every orbit $\mathcal{G}_x$ supports a function $f_x:\mathcal{G}_x\to \bbR$ such that
    \begin{equation}
        \langle (I-M)f_x,f_x\rangle \leq (1-\lambda+\varepsilon)\|f_x\|^2.
        \label{eq:rhogeqlambda}
    \end{equation}
    

Since $\mathcal{G}_x$ is countable, using the Monotone Convergence Theorem, we can assume $f_x$ is supported on a finite ball. Let $R_x>0$ be minimal such that the interior of the ball $B_{\mathcal G_x}(x,R_x)$ of radius $R_x$ around $x$ supports a function $\psi_x$ satisfying \eqref{eq:rhogeqlambda}. Since balls of fixed radius depend measurably on $x$, the map $x\mapsto R_x$ is measurable, so we can choose $R_0>0$ such that 
    $$\nu\, (\{x\in X \mid R_x\leq R_0\}) >0$$
and put $X_1:=\{x\in X \mid R_x\leq R_0\}.$ Since there are only finitely many rooted graphs of radius $R_0$ labeled by $S$, there exists a positive measure set $X_2\subset X_1$ such that for all $x\in X_2$, the rooted graphs $(B_{\mathcal G_x}(x,R_0),x)$ are all isomorphic to some $(\mathcal G,o)$ as rooted $S$-labeled graphs. By restricting to a smaller subset, we can assume that $\nu(X_2)$ is finite. Fix $\psi:\mathcal{G}\to\bbR$ satisfying
    $$\langle (I-M)\psi, \psi\rangle \leq (1-\lambda+\varepsilon)\|\psi\|^2,$$
and for $x\in X_2$, let $B_x\subset X$ be the image of $\mathcal{G}$ via the unique labeled isomorphism $(\mathcal G,o)\simeq (B_{\mathcal{G}_x}(x,R_0),x)$. 

Let $S'$ be the set of all products of at most $2R_0$ elements of $S$.  At this point we need to use a Rokhlin-type lemma, which will be stated and proved below (see Lemma \ref{lem:TurboRokhlin}). Upon applying this to the graphing $(S'X_2,\nu, S')$ we find a partition $S'X_2=B\sqcup \bigsqcup_{j=1}^N A_j$ with $\nu(B)<\nu(X_2)/2$, such that $A_j\cap sA_j=\{x\in A_j\mid sx=x\}$ for every $s\in S'$. This translates to the condition that that $B_{x}$ and $B_{x'}$ are disjoint for every distinct pair of points $x,y \in A_j$. Since the sets $A_j$ cover a subset of $X_2$ of measure at least  $\nu(X_2)/2$, there exists $j$ such that $X_3:=X_2\cap A_j$ has positive measure.  
The set $P:=\bigcup_{x\in X_3} B_x$ is then a disjoint union of its finite connected components $B_x$, so it is a finite connected component in the sense of Definition \ref{def:FinComp}. The function $\psi:\mathcal{G}\to\bbR$ naturally induces a function $f:P\to\bbR$ defined by $f|_{B_x} := \psi$ for all $x\in X_3$, where we have identified $B_x$ with $\mathcal{G}$ using the isomorphism of rooted $S$-labeled graphs $(B_x, x)\simeq (\mathcal{G},o)$.

Then we easily verify $\langle (I-M)f, f\rangle \leq (1-\lambda+\varepsilon)\|f\|^2$, namely
    $$\|f\|^2 = \int_{X_3} \|\psi\|^2 d\nu = \nu(X_3) \|\psi\|^2,$$
and similarly
    $$  \langle Mf, f\rangle = \int_{X_3} \langle M\psi,\psi\rangle \, d\nu = \nu(X_3) \langle M\psi , \psi\rangle.$$
 Taking $\varepsilon\to 0$, we see that $X$ has embedded spectral radius at least $\lambda$.

We prove the other direction. The proof will use the mass transport principle for unimodular random graphs. In our case the unimodular random graph is given by $(\mathcal G_x,x)$ where $x\in X$ is $\nu|_P$-random. We argue by contradiction, so assume that $(X,\nu)$ has embedded spectral radius at least $\lambda$ but at the same time stabilizers have co-spectral radius $\rho<\lambda$  with positive probability. By ergodicity, there exists an $h>0$ such that $\rho(\mathcal{G}_x)\leq\lambda-h$ a.s.

Since $X$ has spectral radius at least $\lambda$, there exists $f\in L^2(X,\nu)$ nonzero and supported on the interior of a finite connected component $P\subseteq X$ with $\nu(P)<\infty$ such that
\begin{equation}\label{eq:WeakHypCnd} \langle (I-M)f,f\rangle \leq \left(1-\lambda+\frac{h}{2}\right)\|f\|^2.\end{equation}

As in Section \ref{sec:relations}, let $\mathcal{R}_P$ be the equivalence relation generated by the graphing on $P$. We write $P_x^o:=[x]_{\mathcal{R}_P}$ for the connected component of $x\in P$. Define $K:\mathcal{R}_P\to\bbR$ by
    	\begin{equation}
		K(x,y):=\frac{f(x)^2}{\|f\|_{P_x^o}^2} (2|S|)^{-1} \sum_{s\in S} (f(ys)-f(y))^2
	\end{equation}
By the mass transport principle, we equate
    \begin{equation}
        \int_P \sum_{x\in P_y^o} K(x,y) \, d\nu(y) = \int_P \sum_{y\in P_x^o} K(x,y) \, d\nu(x).
        \label{eq:mtp}
    \end{equation}
We start by computing the integrand on the right-hand side. Rewriting
    $$(f(ys)-f(y))^2=f(ys)(f(ys)-f(y))+f(y)(f(y)-f(ys)),$$ 
we find (using $S$ is symmetric) for every $x\in X$ that
    \begin{align*}
        \sum_{y\in P_x^o} K(x,y)    &=  \frac{f(x)^2}{\|f\|_{P_x^o}^2} |S|^{-1} \sum_{y\in P_x^o} \sum_{s\in S} f(y)\left(f(y)-f(ys)\right)          \\
                                        &=   \frac{f(x)^2}{\|f\|_{P_x^o}^2} \left\langle (I-M)f, f\right\rangle_{P_x^o}.
    \end{align*}
Using $\rho(\mathcal{G}_x)\leq\lambda-h$ and $f\geq 0$, we can estimate
    \begin{equation}
        \sum_{y\in P_x^o} K(x,y)\geq (1-\lambda+h) f(x)^2.
        \label{eq:sumy-comp}
    \end{equation}
Therefore we have the following estimate for the right-hand side in the Mass Transport Equation \eqref{eq:mtp}
    \begin{equation}
        \int_P \left(\sum_{y\in P_x^o} K(x,y)\right) \, d\nu(x)\geq (1-\lambda+h) \|f\|^2.
        \label{eq:MTPsumy}
    \end{equation}
Next, we compute the integrand on the left-hand side of the Mass Transport Equation \eqref{eq:mtp}, namely for $y\in X$, we have
    \begin{align*}
        \sum_{x\in P_y^o} K(x,y)      &= \sum_{x\in P_y^o} \frac{f(x)^2}{\|f\|^2_{P_y^o}} (2|S|)^{-1} \sum_{s\in S} (f(ys)-f(y))^2 \\
                                       &= f(y)(I-M)(f)(y), 
    \end{align*}
where we used $S$ is symmetric and the action is measure-preserving. Hence, integrating the above equation over $y$ and using the mass transport principle to estimate this by the right-hand side of Equation \eqref{eq:MTPsumy}, we find
    $$\langle (I-M)f,f\rangle \geq (1-\lambda+h)\|f\|^2.$$
This contradicts the choice of $f$ in Equation \eqref{eq:WeakHypCnd}.
\end{proof}

We end this section with the following technical Rokhlin-type lemma that was used in the above proof:
\begin{lem}\label{lem:TurboRokhlin}
$(X,\nu,(\varphi_i)_{i\in I})$ be a finite measure preserving symmetric graphing on a finite measure space. Then, for every $\delta>0$ there exists a measurable partition $X=B\sqcup \bigsqcup_{j=0}^N A_j$, such that $\nu(B)\leq \delta$ and $$A_j \cap \varphi_i (A_j\cap U_i)= \{x\in A_j\cap U_i \mid \varphi_i(x)=x\}.$$
\end{lem}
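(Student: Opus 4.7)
The plan is to reformulate the statement as a measurable graph coloring problem. First I would define a Borel graph $G$ on $X$ by declaring $x \sim y$ whenever $x \neq y$ and $y = \varphi_i(x)$ for some $i \in I$; the symmetry assumption on the graphing ensures this is a well-defined undirected relation. Since each point $x$ has at most $|I|$ images under the partial maps $\varphi_i$, the graph $G$ has vertex degree bounded by $|I|$. The conclusion of the lemma for a family $\{A_j\}$ is exactly the condition that each $A_j$ is an independent set in $G$ (fixed points $\varphi_i(x)=x$ do not give edges of $G$, which is why they appear on the right-hand side).

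Next I would invoke a Borel graph coloring result: a Borel graph of maximum degree $d$ on a standard Borel space admits a Borel proper coloring with $d+1$ colors. This is the classical Kechris--Solecki--Todorcevic theorem. Let $c \colon X \to \{0, 1, \ldots, |I|\}$ be such a coloring, set $N := |I|$, $A_j := c^{-1}(j)$, and $B := \emptyset$. Then if $x \in A_j \cap U_i$ with $\varphi_i(x) \neq x$, the pair $\{x, \varphi_i(x)\}$ is a $G$-edge, so $c(\varphi_i(x)) \neq j$ and hence $\varphi_i(x) \notin A_j$. This gives $A_j \cap \varphi_i(A_j \cap U_i) \subseteq \{x \in A_j \cap U_i : \varphi_i(x) = x\}$, and the reverse inclusion is trivial. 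In particular $\nu(B) = 0 \leq \delta$, so the $\delta$ in the statement provides unneeded slack.

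If one prefers to avoid invoking Kechris--Solecki--Todorcevic, one can build the coloring iteratively: choose $A_0$ to be a maximal measurable independent set in $G$, then $A_1$ a maximal measurable independent set in $G$ restricted to $X \setminus A_0$, and so on. By maximality, every vertex outside $A_0 \cup \cdots \cup A_k$ has at least one $G$-neighbor inside $A_0 \cup \cdots \cup A_k$, so the induced subgraph on the remaining set has maximum degree at most $|I| - (k+1)$. After $|I| + 1$ steps the remaining set is empty (or null, which can be absorbed into $B$). The main obstacle in either approach is the measurable-selection step guaranteeing the existence of maximal Borel independent sets; the presence of the exceptional set $B$ of measure up to $\delta$ in the statement gives enough flexibility that a direct greedy construction over a countable family of Borel generators of the $\sigma$-algebra suffices, avoiding the stronger Borel-selection theorems entirely.
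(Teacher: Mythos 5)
Your proof is correct, and it takes a genuinely different route from the paper's. The paper reduces to a single map $\varphi\colon U\to\varphi(U)$, decomposes $X$ into (i) the ``escaping'' part (partitioned into sets $E_n$ of points that leave $U$ after exactly $n$ steps, which are 2-colored by parity), (ii) the periodic part (colored by a finite Rokhlin-type argument plus an error set for large odd periods), and (iii) the aperiodic part (handled by the usual Rokhlin lemma, which contributes the second error set); then it takes the product of the single-map partitions to handle the whole graphing. This argument uses measure-preservation essentially (the Rokhlin lemma requires it) and produces nonempty error sets $B$ by design. Your argument instead repackages the desired conclusion as a proper $(|I|+1)$-coloring of the Borel graph of degree $\leq|I|$ generated by the graphing, and invokes the Kechris--Solecki--Todorcevic Borel chromatic number bound. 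This buys several things: it is shorter, it makes no use of the measure or of measure-preservation at all, and it achieves $B=\emptyset$ rather than merely $\nu(B)\leq\delta$ --- so the $\delta$-slack in the statement is not needed. The cost is that it invokes KST as a black box where the paper is self-contained with classical ergodic-theoretic tools.

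Two small caveats. First, KST applies to \emph{Borel} graphs; if the $\varphi_i$ are only $\nu$-measurable one should first replace them by Borel maps agreeing $\nu$-a.e.\ and absorb the discarded null set into $B$, which is harmless here. Second, your sketched alternative of iteratively peeling off ``maximal measurable independent sets'' is essentially circular as written --- the existence of a maximal Borel independent set in a bounded-degree Borel graph is proved in the same KST paper and is not a triviality --- and the gesture toward ``a direct greedy construction over a countable family of Borel generators'' would need to be spelled out to be convincing. But since the KST route is complete on its own, this does not affect the correctness of your proof.
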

\begin{proof}
We start by proving the lemma for a single measure preserving invertible map $\varphi\colon U\to \varphi(U).$ Since we do not assume that $\varphi$ is defined on all of $X$ we need to treat separately the subset of elements where $\varphi$ can be applied only finitely many times. For any $n\in \mathbb N$ define
$$E_n:=\{x\in X\mid \varphi^{n-1}(x)\in U \text{ but } \varphi^n(x)\not\in U\}.$$
Put $A_{\rm 0}=\cup_{n=0}^\infty E_{2n}$ and $A_{\rm 1}=\cup_{n=0}^\infty E_{2n+1}.$ We obviously have $A_0\cap A_1=\emptyset$,  $\varphi^{\pm 1}(A_0)\subset A_1$ and $\varphi^{\pm 1}(A_1)\subset A_0$. This reduces the problem to the subset $Y:=X\setminus \bigcup_{n=0}^\infty E_n$. By definition, $\varphi(Y)=Y$. We further decompose $Y$ into the periodic and aperiodic parts $Y^{\rm p}, Y^{\rm ap}$. The periodic part can be partitioned into a fixed point set $A_2=\{x\in Y^{\rm p}\mid \varphi(x)=x\}$, finitely many sets $A_3,\ldots, A_M$ permuted by $\varphi$ and a remainder $B_1$ of measure $\nu(B_1)<\delta/2$ coming from large odd periods. By the usual Rokhlin lemma, the aperiodic part $Y^{\rm ap}$ can be decomposed as $Y^{\rm ap}=B_2\sqcup A_{M+1}\sqcup\ldots \sqcup A_N$ where $\nu(B_2)<\delta/2$, $\varphi(A_{M+k})=A_{M+k+1}$ for all $M+k<N$ and $\varphi(A_N)\subset B_2.$ Put $B=B_1\cup B_2$. This ends the construction for a single map. 

Suppose now that the graphing consists of $d$ maps $\varphi_1,\ldots,\varphi_d$ and their inverses. For each $i=1,\ldots d$ there exists a partition $X=B^i\sqcup \bigsqcup_{j=1}^{N_i}A_j^i$ such that $\nu(B^i)<\frac{\delta}{d}$ and $\varphi_i (A_j^i\cap U_i)\cap A_j^i= \{x\in (A_j^i\cap U_i) \mid \varphi_i(x)=x\}.$
Let $B:=\cup_{i=1}^d B^i$ and define the partition $\{A_j\}$ as the product partition $\bigwedge_{i=1}^d  \{A_j^i\}.$ This partition satisfies all the desired conditions.
\end{proof}

\section{Proof of the Main Theorem}

\subsection{Preliminary reductions and general strategy} Let $\Gamma$ be a countable group with a co-amenable subgroup $H_1$ and an IRS $H_2$ with co-spectral radius $\lambda_2$. We need to show that $H_1\cap H_2$ has co-spectral radius at least $\lambda_2$ as well. First of all, without loss of generality we can assume $H_2$ is ergodic. The group $H_1$ is realized as the stabilizer of a point in $X_1:=H_1\bs \Gamma$ and $H_2$ is realized as the stabilizer of a random point in a p.m.p. action of $\Gamma$ on $(X_2,\nu_2)$. We use Proposition \ref{prop:WHandCoAm} to find a finite connected component $P_2$ of $X_2$ and a function $f_2$ on $P_2$ that witnesses the spectral radius $\lambda_2$. Next, using a large F\"olner set in $X_1$, we produce a new finite connected component in the product system $X_1\times X_2$ and a new function which certifies that the co-spectral radius of stabilizers in $X_1\times X_2$ is arbitrarily close to $\lambda_2$ on almost every ergodic component of the product measure.  

\subsection{Reformulation of the problem in measure theoretic terms.}\label{sec:MSetup}

Write $(X_1,\nu_1)$ for the set $H_1\backslash \Gamma$ endowed with the counting measure. It is an infinite ergodic measure-preserving action of $\Gamma$. Let $\Gamma\curvearrowright (X_2,\nu_2)$ be a p.m.p. Borel action on a standard Borel probability space such that $H_2=\Gamma_x$ for $\nu_2$-random $x$.  We will consider the action of $\Gamma$ on the product system $(X_1\times X_2, \nu_1\times \nu_2).$ To shorten notation we write $\nu=\nu_1\times \nu_2$. The intersection $H_1\cap H_2$ is nothing else than the stabilizer of a random point $x\in \{[H_1]\}\times X_2$. Note that for such $x$, we have $\rho(\Gamma_x\bs\Gamma)\leq \lambda_2:=\rho(H_2\bs\Gamma)$ almost surely. Set 
$$C_0:=\{x\in X_1\times X_2 \mid  \rho(\Gamma_x \bs \Gamma)= \lambda_2\}.$$
	
Since conjugate subgroups have the same co-spectral radius, the set $C_0$ is invariant under the action of $\Gamma$.
Let $(X_1\times X_2, \nu)\to (Z,\tau)$ be the ergodic decomposition given by Corollary \ref{cor:erg-decomp}, and set \[Z_0:=\{z\in Z\mid \nu_z(C_0)>0\}.\] By ergodicity and invariance, the set $C_0$ has full $\nu_z$-measure for every $z\in Z_0$.  Theorem \ref{thm:main-spectral} is equivalent to the identity $C_0=X_1\times X_2$ modulo a null set, so it will follow once we show that  $\tau(Z_0)=1$. By Proposition \ref{prop:WHandCoAm}, $z\in Z_0$ if and only if the following condition holds: For every $\eta>0$ there exists a function $h$ supported on the interior of a finite connected component of $(X_1\times X_2,\nu_z, S)$ (according to Definition \ref{def:FinComp}), such that 
\begin{equation}\label{eq:Condition}
    \langle (I-M)h, h\rangle_{\nu_z}\leq (1-\lambda_2+\eta)\|h\|_{\nu_z}^2.
\end{equation}
We will refer to nonnegative, nonzero functions supported on interiors of finite connected components of $(X_1\times X_2,\nu, S)$ as \textbf{test functions}. It is easy to check that a test function for $\nu$ is also a test function for almost all ergodic components $\nu_z$. It will be convenient to name the set of ergodic components $z$ for which there exist a test function satisfying \eqref{eq:Condition} with specific $\eta$. Let 
$$Z_\eta:=\{z\in Z\mid \text{ there exists } h \text{ such that } \langle (I-M)h, h\rangle_{\nu_z}\leq (1-\lambda_2+\eta)\|h\|_{\nu_z}^2\}.$$
Obviously we have $Z_0=\bigcap_{\eta>0} Z_\eta$ and $Z_{\eta}\subset Z_{\eta'}$ for $\eta<\eta'$. In the following sections we show that $\tau(Z_{\eta})\to 1$ as $\eta\to 0$. This will imply that $\nu(Z_\eta)=1$ for every $\eta>0$, and consequently that $\nu(Z_0)=1$, which is tantamount to Theorem \ref{thm:main-spectral}.

\subsection{Construction of test functions.} 
\begin{lem}\label{lem:TestF}
Let $\delta>0$. There exists a test function $f$ and a set $Z'\subset Z$ such that 
\begin{enumerate}
    \item $\|f\|_{\nu_z}^2\geq (1-\delta)\|f\|_{\nu}^2$, for every $z\in Z'$.
    \item $\tau(Z')\geq 1-\delta.$
    \item $\langle (I-M)f,f\rangle_{\nu} \leq (1-\lambda_2+\delta)\|f\|_{\nu}^2.$
\end{enumerate}
\end{lem}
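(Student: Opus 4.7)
The plan is to construct $f$ as a product of a F{\o}lner indicator on $X_1$ with a test function on $X_2$. First, Proposition~\ref{prop:WHandCoAm} applied to $(X_2,\nu_2)$ (which by hypothesis has embedded spectral radius at least $\lambda_2$) yields a nonnegative test function $f_2$ on $X_2$, supported on the interior of a finite measure finite connected component $P_2\subset X_2$, with $\langle(I-M)f_2,f_2\rangle_{\nu_2}\leq(1-\lambda_2+\eta)\|f_2\|^2$ for a small parameter $\eta>0$ to be chosen later. Using co-amenability of $H_1\subset\Gamma$, I would pick an $\varepsilon$-F{\o}lner set $F_1\subset X_1$ with $\varepsilon>0$ also small. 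I then set $f:=\bbone_{F_1}\otimes f_2$; since its support lies in the interior of the finite connected component $F_1\times P_2\subset X_1\times X_2$, this is a test function.

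Condition~(3) should follow from a direct F{\o}lner-product calculation. Since $|F_1\cap s^{-1}F_1|\geq(1-\varepsilon)|F_1|$ and $f_2\geq 0$, one has $\langle Mf,f\rangle_\nu\geq(1-\varepsilon)|F_1|\langle Mf_2,f_2\rangle_{\nu_2}$. Combined with $\|f\|_\nu^2=|F_1|\|f_2\|^2$, this yields
$$\langle(I-M)f,f\rangle_\nu\leq|F_1|\bigl(\varepsilon+(1-\varepsilon)(1-\lambda_2+\eta)\bigr)\|f_2\|^2,$$
which is at most $(1-\lambda_2+\delta)\|f\|_\nu^2$ for $\varepsilon,\eta$ small enough in terms of $\delta$.

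The essential step is verifying (1) and (2), i.e., showing the $\nu_z$-norm of $f$ captures nearly all of its $\nu$-norm for $\tau$-most $z$. I would disintegrate $\nu_z$ over its $X_1$-marginal (which equals $\nu_1$ by Corollary~\ref{cor:erg-decomp}) as $\nu_z=\int\delta_{x_1}\otimes\nu_z^{x_1}\,d\nu_1(x_1)$, where each $\nu_z^{x_1}$ is a probability measure on $X_2$ satisfying the equivariance $\nu_z^{\gamma x_1}=\gamma_*\nu_z^{x_1}$ and the averaging identity $\int_Z\nu_z^{x_1}\,d\tau(z)=\nu_2$. Then
$$\|f\|_{\nu_z}^2=|F_1|\int f_2^2\,d\mu_{F_1}^z,\qquad\mu_{F_1}^z:=\frac{1}{|F_1|}\sum_{x_1\in F_1}\nu_z^{x_1},$$
and the F{\o}lner property combined with the equivariance yields $\|s_*\mu_{F_1}^z-\mu_{F_1}^z\|_{\mathrm{TV}}\leq 2\varepsilon$ for every $s\in S$, so each $\mu_{F_1}^z$ is an approximately $\Gamma$-invariant probability measure on $X_2$. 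Since $\nu_2$ is ergodic and is the only $\Gamma$-invariant probability measure absolutely continuous to itself, and since $\int_Z\mu_{F_1}^z\,d\tau=\nu_2$, one expects $\mu_{F_1}^z$ to concentrate near $\nu_2$ in $\tau$-probability as $\varepsilon\to 0$, delivering the desired lower bound $\int f_2^2\,d\mu_{F_1}^z\geq(1-\delta)\|f_2\|^2$ on a set of $\tau$-measure $\geq 1-\delta$.

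The main obstacle will be quantifying this last convergence in a $\tau$-pointwise sense. The integrated identity $\int_Z\mu_{F_1}^z\,d\tau=\nu_2$ only gives information in $\tau$-expectation, so extracting a bound that holds off a set of $\tau$-measure at most $\delta$ requires a concentration argument. I would attempt this via a mean ergodic theorem along the F{\o}lner sequence, applied to the unitary $\Gamma$-representation on $L^2(X_2,\nu_2)$ — exploiting that $\nu_z^{x_1}=s(x_1)_*\nu_z^{x_0}$ for a measurable section $s\colon X_1\to\Gamma$, and that $\nu_z^{x_0}$ is an $H_1$-invariant probability measure on $X_2$ — followed by Egorov's theorem to upgrade $\tau$-a.s.\ convergence (as $F_1$ ranges over a F{\o}lner sequence in $X_1$) to uniform concentration for $\tau$-most $z$ at a single sufficiently large $F_1$.
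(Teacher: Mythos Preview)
Your construction $f=\bbone_{F_1}\otimes f_2$ and the verification of (3) match the paper's argument (one minor correction: $f$ is supported on $F_1\times\mathrm{int}(P_2)$, which is \emph{not} contained in the interior of $F_1\times P_2$; the paper takes $(F_1\cup\partial F_1)\times P_2$ as the finite connected component instead).

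The genuine gap is in your treatment of (1) and (2). Your plan rests on a ``mean ergodic theorem along the F{\o}lner sequence'' in $H_1\bs\Gamma$, but no such theorem is available when $\Gamma$ is non-amenable. A F{\o}lner sequence in the Schreier graph lifts via your section $s$ to subsets of $\Gamma$ that are not F{\o}lner, and there is no general convergence statement for averages $\frac{1}{|F_n|}\sum_{x_1\in F_n}\pi(s(x_1))$ in a unitary $\Gamma$-representation. Moreover the fiber measures $\nu_z^{x_1}$ need not be absolutely continuous with respect to $\nu_2$ (already when $H_1=\{e\}$ and $\Gamma$ is amenable, each $\nu_z^{x_1}$ is a Dirac mass), so you cannot pass to $L^2(X_2,\nu_2)$-densities. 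Your observation that $\mu_{F_1}^z$ is $O(\varepsilon)$-invariant, combined with extremality of $\nu_2$ and the averaging identity $\int_Z\mu_{F_1}^z\,d\tau=\nu_2$, is suggestive but does not yield concentration without an actual limit theorem; and Egorov presupposes a.s.\ convergence, which you have not established.

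The paper bypasses all of this by averaging over the \emph{random walk on $X_2$} rather than the F{\o}lner set on $X_1$. Kakutani's ergodic theorem for the Markov operator on $(X_2,\nu_2)$ gives $m_0$ and a set $X_2'$ with $\nu_2(X_2')\geq 1-\varepsilon_2$ on which $\int_\Gamma f_2^2(x_2\gamma^{-1})\,d\mu^{m_0}(\gamma)$ is $\varepsilon_2$-close to $\|f_2\|_{\nu_2}^2$. Using only $\Gamma$-invariance of $\nu_z$ one writes $\|f\|_{\nu_z}^2=\int\!\int f^2((x_1,x_2)\gamma^{-1})\,d\mu^{m_0}(\gamma)\,d\nu_z$ and restricts to $F'\times X_2'$, where $F'\subset F_1$ is the set of points at distance $\geq m_0$ from $\partial F_1$; this makes the $\bbone_{F_1}$ factor identically $1$ and gives $\|f\|_{\nu_z}^2\geq(1-\varepsilon_2)\,\nu_z(F'\times X_2')\,\|f_2\|_{\nu_2}^2$. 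The decisive final step is Markov's inequality applied to the nonnegative deficit $|F_1|-\nu_z(F'\times X_2')$, which works because Corollary~\ref{cor:erg-decomp} supplies the \emph{uniform upper bound} $\nu_z(F'\times X_2')\leq\nu_z(F_1\times X_2)=|F_1|$. That uniform ceiling is precisely what converts an on-average identity into a lower bound for $\tau$-most $z$, and it is what your approach lacks.
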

\begin{proof}
Let  $\varepsilon_2>0$. Since $\Gamma\curvearrowright (X_2,\nu_2)$ has embedded spectral radius $\lambda_2$, there is a finite measure, finite connected component $P_2\subset X_2$ and nonzero $f_2\in L^2(X_2,\nu_2)$ as is in  Definition \ref{def:WeakSpectralRad}, i.e. $f_2$ is supported on the interior ${\rm int}(P_2)$ and 
    \begin{equation}\label{eq:f2}\langle (I-M)f_2,f_2\rangle\leq (1-\lambda_2+\varepsilon_2)\|f_2\|^2.\end{equation}
By Remark \ref{rmk:nonnegative}, we may assume $f_2\geq 0$.

We will show that for a good enough F{\o}lner set $F\subseteq X_1$ and small enough $\varepsilon_2$, the function $$f:=\mathds{1}_{F} \times f_2$$ satisfies the conditions of the lemma. While (3) is relatively straightforward, conditions (1) and (2) require some work and strongly use the fact that $X_2$ is a p.m.p. action. 

Consider the following probability measures on $\Gamma$: 
    $$\mu:=\frac{1}{|S|}\sum_{s\in S}\delta_s\textrm{ and } \mu^m:= \frac{1}{m}\sum_{i=0}^{m-1}\mu^{\ast i}\textrm{ for } m\in\bbN.$$ 
By Kakutani's ergodic theorem \cite{Kakutani1951}, there exists $m_0\geq 1$ such that 
\begin{equation}\label{eq:Kakutani} \left|\int_{\Gamma}f_2^2(x\gamma^{-1})d\mu^{m_0}(\gamma)-\int_{X_2} f_2^2 d\nu_2\right|\leq \varepsilon_2\|f_2\|^2
\end{equation} for all $x\in X_2'$ where $\nu_2(X_2')\geq 1-\varepsilon_2$.

Fix $0<\varepsilon_1\ll \varepsilon_2$ very small. The precise choice only depends on $\varepsilon_2$ and will be specified at the end of the proof. Let $F\subset X_1$ be an $\varepsilon_1$-F{\o}lner set and write $Y=(F\cup \partial F)\times X_2$. 

Write $F'$ for the set of points of $F$ which are at distance at least $m_0$ from the boundary $\partial F$ and set $Y':=F'\times X_2'$, where $X_2'$ is as in (\ref{eq:Kakutani}). We claim that for $\varepsilon_1$ small enough, we will have $(\nu_1\times \nu_2)(Y')\geq |F|(1-2\varepsilon_2)$. Indeed, using that $F$ is $\varepsilon_1$-F{\o}lner, we have
	$$|F'|\geq |F\cup\partial F|-|\partial F|\sum_{i=1}^{m_0-1}|S|^{i}\geq |F\cup \partial F|(1-|S|^{m_0}\varepsilon_1).$$ 
Clearly for sufficiently small $\varepsilon_1$, we have 
	\begin{equation}\label{eq:YprimLarge}
		\nu(Y')=|F'|\nu_2(X'_2)\geq (1-\varepsilon_1|S|^{m_0})(1-\varepsilon_2)|F|\geq (1-2\varepsilon_2)|F|.
	\end{equation}
Write $P:=(F\cup\partial F)\times P_2\subset Y$. By construction, the support of $f$ is contained in $P\subset Y$. Note that since $P_2$ is a finite connected component of $X_2$ and $F\cup\partial F$ is finite, the set $P$ will be a finite connected component of $(X_1\times X_2,\nu,S)$ in the sense of the Definition \ref{def:FinComp}.  
Let $\nu=\int_Z\nu_zd\tau(z)$ be the ergodic decomposition of $\nu$ as in Section \ref{sec:decomp}. The set $P$ is also a finite connected component of $(X_1\times X_2,\nu_z,S)$ for almost every $z\in Z$.
For every $z\in Z$, the measure $\nu_z$ is invariant under the action of $\Gamma$, so that 
    \begin{align*}
    \int f^2(x_1,x_2)d\nu_z(x_1,x_2)&= \int_\Gamma \int f^2(x_1\gamma^{-1},x_2\gamma^{-1}) d\nu_z(x_1,x_2) d\mu^{m_0}(\gamma)\\   
    &\geq \int_{Y'}\int_\Gamma f^2(x_1\gamma^{-1},x_2\gamma^{-1}) d\mu^{m_0}(\gamma)d\nu_z(x_1,x_2)
    \end{align*}
Since $Y'=F'\times X_2'$ and $F'\gamma^{-1}\subset F$ for any $\gamma\in \supp \mu^{m_0}$ we can use the identity $f=\mathds 1_F \times f_2$ to rewrite the last integral as 
\[ \int_{F'\times X_2'} \left(\int_\Gamma f_2^2(x_2\gamma^{-1})d\mu^{m_0}(\gamma)\right)d\nu_z(x_1,x_2).\]
We use (\ref{eq:Kakutani}) to estimate the innermost integral and obtain a lower bound on $\|f\|_{\nu_z}^2$:
	\begin{align*}
		\int_P f^2 d\nu_z\geq & (1-\varepsilon_2)\nu_z(F'\times X_2')\int_{X_2} f_2^2 d\nu_2 = (1-\varepsilon_2) \nu_z(Y')\|f_2\|_{\nu_2}^2.
	\end{align*}
By \eqref{eq:YprimLarge}, we have $\nu(Y')\geq (1-2\varepsilon_2)|F|$, so we can apply Markov's inequality to get a set $Z'\subset Z$ with $\tau(Z')\geq 1-\sqrt{2\varepsilon_2}$ such that $\nu_z(Y')\geq (1-\sqrt{2\varepsilon_2})|F|$ for $z\in Z'$. Finally we get that for $z\in Z'$:
	\begin{equation}\label{eq:LBonP}
		\|f\|_{\nu_z}^2\geq (1-\varepsilon_2)(1-\sqrt{2\varepsilon_2})|F|\|f_2\|_{\nu_2}^2=(1-\varepsilon_2)(1-\sqrt{2\varepsilon_2})\|f\|_{\nu}^2.
	\end{equation}
This establishes Properties (1) and (2) of the lemma. It remains to address (3). 
Using $f_2\geq 0$, we estimate $\langle f,Mf\rangle_\nu$ in terms of $\langle f_2, M f_2\rangle_{\nu_2}$ as follows:
    \begin{align*}
        \langle f,Mf\rangle_{\nu} &= |S|^{-1} \int_{X_1} \int_{X_2} \bbone_F(x_1) f_2(x_2) \sum_{s\in S} \bbone_F(x_1 s) f_2(x_2 s) d\nu_2(x_2) d\nu_1(x_1)   \\
                    &\geq |S|^{-1} \int_{\text{int}(F)} \int_{X_2} f_2(x_2) \sum_{s\in S} f_2(x_2 s)       d\nu_2(x_2) d\nu_1(x_1)        \\
                    &= |\text{int}(F)| \langle f_2, M f_2\rangle_{\nu_2}.
    \end{align*}
Hence
    \begin{align*}
        \langle (I-M)f,f\rangle_{\nu}   &\leq |F|\langle f_2,f_2\rangle_{\nu_2} -|\text{int}(F)|\langle f_2, Mf_2\rangle_{\nu_2}        \\
                        &=|F|\langle f_2-M f_2,f_2\rangle_{\nu_2} +|F|\left(1-\frac{|\text{int}(F)|}{|F|}\right)\langle Mf_2, f_2\rangle_{\nu_2}.
    \end{align*}
Since $F$ is $\varepsilon_1$-F{\o}lner, we have $|\text{int}(F)|/|F|\geq 1-|S|\varepsilon_1$, so finally we obtain
    \begin{align*}  \langle (I-M)f,f\rangle_{\nu}\leq&  |F|\left(\langle f_2-Mf_2,f_2\rangle_{\nu_2}+|S|\varepsilon_1 \langle Mf_2, f_2\rangle_{\nu_2}\right)\\
    \leq&|F|\left(\langle f_2-Mf_2,f_2\rangle_{\nu_2}+|S|\varepsilon_1\|f_2\|_{\nu_2}^2\right).\end{align*}
By the defining property of $f_2$ (see Equation \eqref{eq:f2}), we then find
    $$\langle (I-M)f,f\rangle_{\nu}\leq |F|(1-\lambda_2+\varepsilon_2+|S|\varepsilon_1)\|f_2\|_{\nu_2}^2=(1-\lambda_2+\varepsilon_2+|S|\varepsilon_1)\|f\|_{\nu}^2.$$
To finish the proof, choose $\varepsilon_2>0$ such that $(1-\varepsilon_2)(1-\sqrt{2\varepsilon_2})\geq (1-\delta)$ and then choose $\varepsilon_1>0$ such that $\varepsilon_2+|S|\varepsilon_1\leq\delta$ and such that \eqref{eq:YprimLarge} holds.
\end{proof}
\subsection{End of the proof.} 

Let $f, \delta, Z'$ be as in the Lemma \ref{lem:TestF}. Using Property (3) of the Lemma, we have \begin{align*}
    \int_{Z'}\langle (I-M)f,f\rangle_{\nu_z}d\tau(z)\leq& \int_{Z}\langle (I-M)f,f\rangle_{\nu_z}d\tau(z)\\ =&\langle (I-M)f,f\rangle_\nu \\
                                    \leq& (1-\lambda_2+\delta)\|f\|_\nu^2.
\end{align*}
By Property (1), we can then estimate
\[  \int_{Z'}\frac{\langle (I-M)f,f\rangle_{\nu_z}}{\|f\|_{\nu_z}^2}d\tau(z)\leq\frac{1-\lambda_2+\delta}{1-\delta}.\]
On the other hand, Proposition \ref{prop:WHandCoAm} and the fact that co-spectral radii of stabilizers are all at most $\lambda_2$, yield the inequality $$\frac{\langle (I-M)f,f\rangle_{\nu_z}}{\|f\|_{\nu_z}^2}\geq 1-\lambda_2 \text{ for almost all }z\in Z.$$ Therefore, by  Markov's inequality there is a positive $\eta=O(\sqrt\delta)$ and a subset $Z''\subset Z$ such that $\tau(Z'')\geq 1-\eta$ and 
$$ \langle (I-M)f,f\rangle_{\nu_z}\leq (1-\lambda_2+\eta){\|f\|_{\nu_z}^2} \text{ for } z\in Z''.$$ 
$\eta$ could be made explicit in terms of $\delta$ and $\lambda_2$, but we will only need that $\eta\to 0$ as $\delta\to 0$. We have $Z''\subset Z_\eta$, so it follows that $\tau(Z_\eta)\to 1$ as $\eta\to 0$. This ends the proof per the discussion at the end of Section \ref{sec:MSetup}. \qed
\bibliographystyle{plain}
\bibliography{ref}

\begin{thebibliography}{10}

\bibitem{7SamN}
Miklos Abert, Nicolas Bergeron, Ian Biringer, Tsachik Gelander, Nikolay
  Nikolov, Jean Raimbault, and Iddo Samet.
\newblock On the growth of {$L^2$}-invariants for sequences of lattices in
  {L}ie groups.
\newblock {\em Ann. of Math. (2)}, 185(3):711--790, 2017.

\bibitem{AGV14}
Miklos Abert, Yair Glasner, and Balint Virag.
\newblock Kesten's theorem for invariant random subgroups.
\newblock {\em Duke Math. J.}, 163(3):465--488.

\bibitem{AL}
David Aldous and Russell Lyons.
\newblock Processes on unimodular random networks.
\newblock {\em Electronic Journal of Probability}, 12:1454--1508, 2007.

\bibitem{BLT}
Oren Becker, Alexander Lubotzky, and Andreas Thom.
\newblock Stability and invariant random subgroups.
\newblock {\em Duke Mathematical Journal}, 168(12):2207--2234, 2019.

\bibitem{Bowen2014}
Lewis Bowen.
\newblock Random walks on random coset spaces with applications to
  {F}urstenberg entropy.
\newblock {\em Inventiones mathematicae}, 196(2):485--510, 2014.

\bibitem{Cohen}
Joel~M Cohen.
\newblock Cogrowth and amenability of discrete groups.
\newblock {\em Journal of Functional Analysis}, 48(3):301--309, 1982.

\bibitem{GL19}
Ilya Gekhtman and Arie Levit.
\newblock Critical exponents of invariant random subgroups in negative
  curvature.
\newblock {\em Geometric and Functional Analysis}, 29(2):411--439, 2019.

\bibitem{ergodic-decomp}
Gernot Greschonig and Klaus Schmidt.
\newblock Ergodic decomposition of quasi-invariant probability measures.
\newblock volume 84/85, pages 495--514. 2000.
\newblock Dedicated to the memory of Anzelm Iwanik.

\bibitem{Grig}
Rostislav~Ivanovich Grigorchuk.
\newblock Symmetric random walks on discrete groups.
\newblock {\em Uspekhi Matematicheskikh Nauk}, 32(6):217--218, 1977.

\bibitem{Kakutani1951}
Shizuo Kakutani.
\newblock Random ergodic theorems and {M}arkoff processes with a stable
  distribution.
\newblock In {\em Second Berkeley Symposium on Mathematical Statistics and
  Probability}, pages 247--261, 1951.

\end{thebibliography}
\end{document}